\newtheorem{definition}{Definition}[section]
\newtheorem{theorem}[definition]{Theorem}
\newtheorem{lemma}[definition]{Lemma}
\newtheorem{proposition}[definition]{Proposition}
\newtheorem{corollary}[definition]{Corollary}
\newtheorem{remark}[definition]{Remark}
\newenvironment{proof}{{\bf Proof }}{{\vskip 0.1cm \hfill$\Box$}}
\begin{document}


\noindent
{\Large \bf Uniform approximation by harmonic polynomials for solving the Dirichlet problem of Laplace’s equation on a disk}
\\ \\
\bigskip
\noindent
{\bf Haesung Lee}  \\
\noindent
{\bf Abstract.} 
In this paper, we study the Dirichlet problem for Laplace’s equation in an open disk. The uniqueness of solutions is ensured by the well-known weak maximum principle. We introduce a novel approach to demonstrate the existence of a solution using harmonic polynomials that converge uniformly to a solution. Specifically, we rigorously derive the convergence rate of the harmonic polynomials and show that smoother boundary data and proximity of the target point to the disk's origin accelerate the convergence. Additionally, we obtain uniform estimates for the derivatives of solutions of arbitrary orders, controlled by $L^1$-boundary data. Notably, the constants in our estimates are significantly improved compared to existing results. Furthermore, we provide a refined convergence region for Taylor's series of the solution, along with error estimates within this region, at each point in the open disk.
\\ \\
\noindent
{Mathematics Subject Classification (2020): {Primary: 31A05, 31A25, Secondary: 35C10, 35A23.}}\\

\noindent 
{Keywords: Existence, Laplace's equations, harmonic polynomials, uniform approximations, uniform estimates, Dirichlet problems. 
}

\section{Introduction}
Harmonic functions play an important role in mathematics, physics, and engineering. Specifically, a harmonic function in a bounded domain with a certain boundary value represents a particular physical quantity in equilibrium with fixed boundary data, precisely described by the Dirichlet problem of Laplace's equation. In this paper, we mainly study a (classical) solution $u \in C^2(B_R) \cap C(\overline{B}_R)$
to the following Dirichlet problem of Laplace’s equation on a disk $B_R:= \{(x,y) \in \mathbb{R}^2: x^2+y^2 <R^2  \}$,
\begin{equation} \label{maineq}
\left\{
\begin{alignedat}{2}
\Delta u &=0&& \quad \mbox{in $B_R$,}\\
u &= g &&\quad \mbox{on $\partial B_{R}$},
\end{alignedat} \right.
\end{equation}
where $R>0$ is a constant, $g \in C(\partial B_R)$ and $\Delta$ denotes Laplace's operator defined by $\Delta w = \frac{\partial^2 w}{\partial x^2}+\frac{\partial^2 w}{\partial y^2}$ for a twice continuously differentiable function $w$.
We derive various results (Theorem \ref{mainthmintro}) for a unique solution to \eqref{maineq}, including the uniform estimates, approximation, and sharp convergence rate for our constructed solution $\widetilde{u}$ to \eqref{maineq}, without using the Poisson integral formula and multi-variable integral calculus. \\
Various studies have been conducted regarding solutions to \eqref{maineq}, with a particular interest in their existence, uniqueness, regularity, and stability. The development of modern analysis, particularly the Sobolev space theory, has led to innovative ways to study the existence and uniqueness of solutions to \eqref{maineq} and their regularity. For instance, suppose there exists a continuous function $\widetilde{g}$ on $\overline{B}_R$ with a certain regularity that satisfies 
$$
\widetilde{g}|_{\partial B_R}=g \; \text{  on } \; \partial B_R.
$$ 
Let us first assume that $\widetilde{g} \in H^{1,2}(B_R)$ (Here for each $r \in [1, \infty]$, $H^{1,r}(B_R)$ denotes the space of all weakly differentiable function $w$ on $B_R$ with $w \in L^r(B_R)$ and $\|\nabla w\| \in L^r(B_R)$). Then, using the Riesz-representation theorem (or the Lax-Milgram theorem), there exists a unique $\hat{u} \in H^{1,2}_0(B_R):= \{w \in H^{1,2}(B_R):  \text{trace}(w) = 0 \text{ on } \partial B_R  \}$ such that $\Delta \hat{u} = -\text{div}(\nabla \widetilde{g})$ weakly in $B_R$, i.e.
\begin{equation} \label{weakformula}
\int_{B_R} \langle \nabla \hat{u}, \nabla \varphi \rangle dx = \int_{B_R} \langle -\nabla \widetilde{g} ,\nabla \varphi \rangle dx, \quad \text{ for all $\varphi \in C_0^{\infty}(B_R)$},
\end{equation}
where $C_0^{\infty}(B_R)$ denotes the set of all smooth functions with compact support in $B_R$ and $\text{trace}(w)$ is defined as follows (see \cite[Theorem 4.6]{EG15}): for any $w_n \in C^{1}(\overline{B}_R)$ with $\lim_{n \rightarrow \infty} w_n = w$ in $H^{1,2}(B_R)$,
$$
\text{trace}(w) := \lim_{n \rightarrow \infty} w_n \quad \text{ in $L^2(\partial B_R)$}.
$$
Defining $\bar{u}:=\hat{u}+\widetilde{g} \in H^{1,2}(B_R)$, we obtain that
\begin{equation*} \label{maineqweak}
\left\{
\begin{alignedat}{2}
\Delta \bar{u} &=0&& \quad \mbox{weakly in $B_R$,}\\
\text{trace}(\bar{u}) &= g &&\quad \mbox{on $\partial B_{R}$}.
\end{alignedat} \right.
\end{equation*}
In that case, $\bar{u}$ is called a weak solution to \eqref{maineq}. Remarkably, if\, $\widetilde{g} \in H^{1,p}(B_R) \cap C(\overline{B}_R)$ for some $p\in (2, \infty)$, then applying the classical $L^{p}$-regularity result (for instance, see \cite[Theorem 7.1]{GM12}) to \eqref{weakformula}, we are able to obtain $\hat{u} \in H^{1,p}(B_R) \cap H^{1,2}_0(B_R) \cap C(\overline{B}_R)$, and hence $\bar{u} = \hat{u} + \widetilde{g} \in H^{1,p}(B_R) \cap C(\overline{B}_R)$ satisfying that $\bar{u}(x) = g(x)$ for all $x \in \partial B_R$. Now, consider the space $H^{2,p}(B_R):=\{ w \in H^{1,p}(B_R): \partial_i \partial_j w \in L^p(B_R) \text{ for all $1 \leq i, j \leq 2$ } \}$ for some $p \in (1, \infty)$ and assume that $\widetilde{g} \in H^{2,p}(B_R) \cap C(\overline{B}_R)$. Then, it follows from \eqref{weakformula} and \cite[Theorem 9.15]{GT01} that $\hat{u} \in H^{2,p}(B_R) \cap H^{1,2}_0(B_R) \cap C(\overline{B}_R)$, and hence $\bar{u}= \hat{u} + \widetilde{g} \in H^{2,p}(B_R) \cap C(\overline{B}_R)$ fulfills
\begin{equation} \label{maineqstrong}
\left\{
\begin{alignedat}{2}
\Delta \bar{u}(x) &=0&& \qquad \mbox{for a.e. $x \in B_R$,}\\
\bar{u}(x) &= g(x) &&\qquad \mbox{for all  $x \in \partial B_{R}$}.
\end{alignedat} \right.
\end{equation}
For more recent $L^p$-regularity results for linear elliptic equations with lower order terms and general domains, we refer to \cite{S06, Kr08, KK19, K21, Kw22, L24, L24JM} and references therein. Meanwhile, to replace \lq\lq for a.e. $x \in B_R$\rq\rq \, in \eqref{maineqstrong} by  \lq\lq for all $x \in B_R$\rq\rq, we need additional regularity results. Indeed, if $\widetilde{g} \in C^{2, \alpha}(\overline{B}_R)$ for some $\alpha \in (0, 1]$, then applying the Schauder theory (\cite[Theorem 9.15]{GT01}) to \eqref{weakformula} we get $\hat{u} \in C^{2, \alpha}(\overline{B}_R) \cap H^{1,2}_0(B_R)$, and hence $\bar{u} =\hat{u} + \widetilde{g} \in C^{2, \alpha}(\overline{B}_R)$ becomes a (classical) solution to \eqref{maineq}. \\
The core we discussed so far has been solving the Dirichlet problem with homogeneous boundary conditions by extending the boundary data $g$ to $\widetilde{g}$ on the entire domain $\overline{B}_R$ and applying known regularity results.
However, in many cases, boundary data $g \in C(\partial B_R)$ may not be extended to a function that satisfies a certain regularity condition. Hence, we need a different argument to solve \eqref{maineq}. To derive the uniqueness of solutions to \eqref{maineq} is quite straightforward by using the weak maximum principle (see Theorem \ref{weakmaxi}). For the existence of a solution $u$ to \eqref{maineq} in case of $g \in C(\partial B_R)$, one can use the Poisson integral formula we mentioned in the front, which expresses the solution $u$ to \eqref{maineq} as a line integral form, as below (see \cite[Theorem 2.1.2]{jo13}):
\begin{equation} \label{poisson}
u(\mathbf{x}) := \frac{R^2-\|\mathbf{x}\|^2}{2\pi R}\int_{\partial B_R} \frac{g(\mathbf{x}')}{\|\mathbf{x}'-\mathbf{x}\|^2}ds(\bold{x}'), \qquad \bold{x} \in B_R.
\end{equation}
Direct calculations confirm that the extension of $u$ defined in \eqref{poisson} to $\overline{B}_R$ is indeed a unique solution to \eqref{maineq}. To calculate $u(\bold{x})$ precisely for each $\bold{x}=(x,y) \in B_R$, we can express the line integral \eqref{poisson} as the one-variable integral below:
\begin{align} \label{calculpoiso}
u(x,y) &= \frac{(R^2-x^2-y^2)}{2\pi} \int_{-\pi}^{\pi} \frac{g(R \cos \phi, R \sin \phi )}{R^2 +x^2+y^2 - 2R \left( x \cos \phi + y \sin \phi   \right)} d \phi, \qquad (x,y) \in B_R.
\end{align}
But calculating the exact value for the integral above may not be easy, and especially note that as $(x,y) \in B_R$ approaches $\partial B_R$, calculating \eqref{calculpoiso} may be more difficult because the absolute value of the integrand goes to infinity. A well-known alternative approach to approximate the solution $u$ defined in \eqref{poisson} is using polar coordinate representation. Precisely, it is known that (see \cite[Section 6.3]{Str08} and \cite[Section 3.3]{AU23})
\begin{equation} \label{polarepres}
u(x,y) = \frac12c_0 + \sum_{n=1}^{\infty} r^n \left( c_n \cos n \theta + d_n \sin n \theta \right), \quad (x,y) = (r \cos \theta, r \sin \theta) \in B_R,
\end{equation}
where $c_n = \frac{1}{\pi R^n}\int_{-\pi}^{\pi} g(R \cos \phi, R \sin \phi)  \cos n \phi\,d\phi$ and \,$d_n  =  \frac{1}{\pi R^n}\int_{-\pi}^{\pi} g(R \cos \phi, R \sin \phi) \sin n\phi\,d\phi$, $n \geq 0$. But calculating the exact value $\theta$ satisfying $(x,y) =(r \cos \theta, r \sin \theta)$ is not easy, and the convergence rate of \eqref{polarepres} may not be explicitly presented in the existing literature.
Now let us introduce our main results:
\begin{theorem}  \label{mainthmintro}
\begin{itemize}
\item[(i)] 
Let $g \in C(\partial B_R)$. Then, there exists $\widetilde{u} \in C(\overline{B}_R) \cap  C^\infty(B_R)$ such that $\widetilde{u}$ is a (unique) solution to \eqref{maineq}.  Moreover, $\widetilde{u}$ enjoys the following uniform estimates via the $L^1$-boundary data: for each $r \in [0, R)$, $(x,y) \in \overline{B}_r$ and $\alpha_1, \alpha_2 \in \mathbb{N} \cup \{0\}$, it holds that
\begin{align} \label{uniformderl1}
 |D^{(\alpha_1, \alpha_2)} \widetilde{u}(x,y)| &\leq  D^{(\alpha_1, \alpha_2)} \left(\frac{-1}{2\pi R}\int_{\partial B_R} |g|\,ds \right) + \frac{(\alpha_1+\alpha_2)!}{\pi (R-r)^{\alpha_1+\alpha_2+1}}  \int_{\partial B_R}|g| d s,
\end{align}
where $\overline{B}_0:=\{\bold{0}\}$.

\item[(ii)]
$\widetilde{u}$ in (i) is analytic in $B_R(\mathbf{0})$. In particular, for each $\bold{x}_0=(x_0,y_0) \in B_R$, $\kappa \in (0,\frac{1}{3})$, 
$\bold{h} =(h_1, h_2) \in \mathbb{R}^2$ with $\| \bold{h}\|\in [0, \kappa L)$ with $L:= R-\|\bold{x}_0\|$, it holds that
\begin{equation} \label{taylorexpress}
\widetilde{u}(\bold{x}_0 + \bold{h})= \sum_{k=0}^{\infty}\left( \sum_{\alpha_1+\alpha_2=k}  \frac{D^{(\alpha_1, \alpha_2)} \widetilde{u}(\bold{x}_0) }{\alpha_1 ! \alpha_2!} h_1^{\alpha_1} h_2^{\alpha_2} \right), \quad 
\end{equation}
and that
\begin{equation}  \label{analytiestim}
\left| \widetilde{u}(\bold{x}_0 + \bold{h})-\sum_{k=0}^{n-1}\left( \sum_{\alpha_1+\alpha_2=k}  \frac{D^{(\alpha_1, \alpha_2)} \widetilde{u}(\bold{x}_0) }{\alpha_1 ! \alpha_2!} h_1^{\alpha_1} h_2^{\alpha_2} \right)  \right| \leq \left(\frac{2 \kappa }{1-\kappa }\right)^n \frac{1}{\pi( L-\kappa L)}  \int_{\partial B_{L} (\bold{x}_0)}|\widetilde{u}| ds.
\end{equation}

\item[(iii)]  
Assume that $g \in C^{\alpha}(\partial B_R)$ for some $\alpha \in (0,1]$. Let $f(\theta):=g(R \cos \theta, R \sin \theta)$, $\theta \in [-\pi, \pi]$.
(Then, $f \in C^{\alpha}([-\pi, \pi])$ with $[f]_{C^{\alpha}([-\pi, \pi])} \leq R^{\alpha} [g]_{C^{\alpha}(\partial B_R)}$ by Proposition \ref{holestim}.) For each $n \geq 1$, let $\widetilde{u}_n$ be a polynomial defined by
$$
\widetilde{u}_n(x,y):=\frac{c_0}{2}+\sum_{k=1}^{n}\left(c_k\sum_{j \; \text{even}  \atop 0 \leq j \leq k}^{}(-1)^\frac{j}{2}\binom{k}{j}x^{k-j}y^{j}+d_k\sum_{j \; \text{odd}  \atop 0 \leq j \leq k}^{}(-1)^\frac{j-1}{2}\binom{k}{j}x^{k-j}y^{j}\right), \quad (x,y) \in \mathbb{R}^2,
$$
where
$$
c_k:=\frac{1}{\pi}\int_{-\pi}^{\pi}f(\theta)\cos k\theta\, d\theta, \quad d_k:=\frac{1}{\pi}\int_{-\pi}^{\pi}f(\theta)\sin k\theta\, d\theta, \quad k \geq 0.
$$
Then, $\widetilde{u}_n$ converges to $\widetilde{u}$ uniformly on $\overline{B}_R$ as $n \rightarrow \infty$. In particular, the following error estimates are fulfilled:  for each $n \geq e^{\frac{1}{\alpha}}$ 
$$
|\widetilde{u}(x,y)-\widetilde{u}_n(x,y)| \leq  2\gamma_0 (2\pi)^{\alpha}[f]_{C^\alpha([-\pi, \pi])} \left(\frac{\sqrt{x^2+y^2}}{R}\right)^{n+1} (\ln n)\left( \frac{1}{n} \right)^{\alpha},\quad \forall \text{$(x,y) \in \overline{B}_R$},
$$
where $\gamma_0>0$ is a constant as in Lemma \ref{jackson}.

\item[(iv)] Let $g$ and $f$ be defined as in (iii) and assume that $f \in C^{k, \alpha}([-\pi, \pi])$ for some $k \in \mathbb{N}$. Let $(\widetilde{u}_n)_{n \geq 1}$ be defined as in (iii). Then, the following improved error estimates are fulfilled:
 for each $n \geq e$
\begin{align*} 
|\widetilde{u}(x,y)-\widetilde{u}_n(x,y)| \leq  2\gamma_k (2\pi)^{\alpha}[f^{(k)}]_{C^\alpha([-\pi, \pi])} \left(\frac{\sqrt{x^2+y^2}}{R}\right)^{n+1} \ln n \cdot \left( \frac{1}{n} \right)^{k+\alpha} \quad \forall \text{$(x,y) \in \overline{B}_R$},   \label{mainestimer2}
\end{align*}
where $\gamma_k$ is a constant as in Lemma \ref{jackson2}. 
\end{itemize}
\end{theorem}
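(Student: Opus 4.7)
First I would observe that the construction of $\widetilde{u}_n$ in (iv) is identical to that of (iii), and only the smoothness hypothesis on $f$ has been strengthened; my plan is therefore to copy the argument of (iii) essentially verbatim and merely substitute the higher-order Jackson estimate (Lemma \ref{jackson2}) for its $C^{\alpha}$ predecessor (Lemma \ref{jackson}).

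Concretely, switching to polar coordinates $(x,y) = (r\cos\theta, r\sin\theta)$ and using the identity $(x + iy)^{k} = r^{k}(\cos k\theta + i\sin k\theta)$ together with the binomial expansion, the polynomial $\widetilde{u}_n(r\cos\theta, r\sin\theta)$ is recognized---after the appropriate $R^{-k}$ normalization of the Fourier coefficients built into $\widetilde{u}$---as the $n$-th partial sum of the Poisson-kernel expansion of $\widetilde{u}$, so that
$$
\widetilde{u}(r\cos\theta,r\sin\theta) - \widetilde{u}_n(r\cos\theta,r\sin\theta) \;=\; \sum_{m=n+1}^{\infty} \Big(\tfrac{r}{R}\Big)^{m}\bigl(a_m\cos m\theta + b_m\sin m\theta\bigr),
$$
with $a_m,b_m$ the Fourier coefficients of $f$. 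Summation by parts against the partial-sum differences $A_j(\theta):=S_j(\theta)-S_n(\theta)$, followed by telescoping of the resulting geometric series, recasts this tail as
$$
\widetilde{u}(r\cos\theta,r\sin\theta) - \widetilde{u}_n(r\cos\theta,r\sin\theta) \;=\; \Big(1-\tfrac{r}{R}\Big)\sum_{m=n+1}^{\infty} \Big(\tfrac{r}{R}\Big)^{m} A_m(\theta),
$$
whence $|\widetilde{u}-\widetilde{u}_n|(r\cos\theta,r\sin\theta)\le (r/R)^{n+1}\sup_{m\ge n+1}\|A_m\|_{\infty}$ by geometric summation, extracting the factor $(r/R)^{n+1}$ demanded by the claim; the boundary case $r=R$ is handled by continuity.

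Next, I would bound $\|A_m\|_{\infty}\le 2\sup_{j\ge n}\|f-S_j\|_{\infty}$ and combine this with the classical Dirichlet-kernel estimate $\|f-S_j\|_{\infty}\le C\ln(j)\,E_j(f)$, where $E_j(f)$ is the best degree-$j$ trigonometric polynomial approximation error of $f$. It is at this last step that I would invoke Lemma \ref{jackson2} in place of Lemma \ref{jackson}, yielding $E_j(f)\le \gamma_k(2\pi)^{\alpha}[f^{(k)}]_{C^{\alpha}([-\pi,\pi])}\,j^{-k-\alpha}$. Since $j\mapsto \ln(j)\,j^{-k-\alpha}$ is monotonically decreasing for $j\ge e^{1/(k+\alpha)}$ and $k+\alpha\ge 1$ forces $e^{1/(k+\alpha)}\le e$, the supremum over $j\ge n$ is attained at $j=n$ precisely under the hypothesis $n\ge e$---which explains the relaxation of the threshold from $n\ge e^{1/\alpha}$ in (iii) to $n\ge e$ in (iv).

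The only substantive obstacle has already been resolved in (iii): producing the $(r/R)^{n+1}$ factor via the Abel-summation trick, and pinning down how the Lebesgue-constant factor, the factor of $2$ arising from $\|A_m\|_{\infty}\le 2\|f-S_n\|_{\infty}$, and the Jackson constant combine into the advertised prefactor $2\gamma_k(2\pi)^{\alpha}$. Because the passage from $C^{\alpha}$ to $C^{k,\alpha}$ alters only the polynomial decay exponent and not the architecture of the proof, no new analytical difficulty arises; the remaining task is pure bookkeeping of constants and a brief monotonicity check on $\ln(j)/j^{k+\alpha}$.
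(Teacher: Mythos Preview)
Your proposal is correct and matches the paper's approach: the paper proves (iv) as a direct consequence of Corollary~\ref{higfourcon} (the $C^{k,\alpha}$ Jackson estimate) together with the Abel-summation argument already carried out for (iii) in Theorem~\ref{limitharmo}, which is precisely what you sketch, including the correct explanation for why the threshold relaxes from $n\ge e^{1/\alpha}$ to $n\ge e$. The only adjustment is that Lemma~\ref{jackson2} (hence Corollary~\ref{higfourcon}) bounds $\|f-S_j\|_\infty$ directly, with the $\ln j$ factor already built in, rather than bounding the best-approximation error $E_j(f)$; so your intermediate Lebesgue-constant step should be dropped and Corollary~\ref{higfourcon} cited in one stroke to obtain the advertised constant $2\gamma_k(2\pi)^\alpha$ exactly.
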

The proofs of Theorem \ref{mainthmintro}(i) and (ii) are presented in the ones of Theorem \ref{mainthmi} and Corollary  \ref{holomoresul}, respectively. The proof of Theorem \ref{mainthmintro}(iii) is presented in the one of Theorem \ref{limitharmo}.  The proof of Theorem \ref{mainthmintro}(iv) is discussed in Section \ref{lastsecacc}.
We here emphasize that our main results are proved independently without utilizing the Poisson integral formula \eqref{poisson} and multi-variable integral calculus, the mean value property (see \cite[Theorem 1.6]{HL11}), and the divergence theorem. \\
The motivation for our work stems from the results in \cite[Section 2.7]{Kr96} where harmonic polynomials that converge uniformly to a solution to \eqref{maineq} are constructed by using the uniform estimates and the weak maximum principle. However, the existence of the harmonic polynomial in \cite[Section 2.7]{Kr96} was proven using the isomorphic property for finite-dimensional vector spaces in linear algebra, and hence, the constructed one may not be explicit, and also, we may not know in \cite[Theorem 2.7.8]{Kr96} the exact convergence rate in which the harmonic polynomial converges uniformly to a solution to \eqref{maineq}.
On the other hand, in our main result (Theorem \ref{mainthmintro}(iii)), we can explicitly construct harmonic polynomials $(\widetilde{u}_n)_{n \geq 1}$ approximating a solution to \eqref{maineq} uniformly and also present a specific convergence rate. Remarkably, it can be seen that the smoother the boundary data $g$ and the closer the target point in $(x,y)$ to the center in the disk, we obtain the more accelerated convergence rate.
Another interesting feature in our main results is the uniform estimates for the derivative of our solution with arbitrary order in terms of the $L^1$-boundary data. 
The constant of the right-hand side in \eqref{uniformderl1} is quite simple (see Remark \ref{l1estimconsim}), and hence our estimate is quite distinguishable from the other known uniform estimates 
derived from the mean value property. \\
As a probabilistic counterpart for our results, let us consider a standard Brownian motion $(W_t)_{t \geq 0}$ in $\mathbb{R}^2$ with a filtered probability space $(\Omega, \mathcal{F}, \mathbb{P}, (\mathcal{F}_t)_{t \geq 0})$ and let $D_{R} := \inf \{ t \geq 0 : \|W_t\| \geq R  \}$. Then, it is known that (see \cite[Theore 9.2.5]{O11}) the unique solution $\widetilde{u}$ to \eqref{maineq} is represented as
\begin{equation} \label{stochasticou}
\widetilde{u}(\bold{x}) = \mathbb{E}\left[ g\left(W_{D_R} +\bold{x}]\right) \right], \quad \;\; \bold{x} \in B_R,
\end{equation}
where $\mathbb{E}$ is the expectation with respect to $(\Omega, \mathcal{F}, \mathbb{P})$. Then, applying Theorem \ref{mainthmintro} to \eqref{stochasticou} enables us to control the stochastic quantity $\mathbb{E}\left[ g\left(W_{D_R} +\bold{x}]\right) \right]$ by the $L^1$-boundary data $\|g\|_{L^1(\partial B_R)}$. Finally, we mention that Theorem \ref{mainthmintro}(ii) and particularly \eqref{analytiestim} are derived based on the estimate \eqref{uniformderl1}. There, the radius of the convergence for Taylor's series of $f$ centered at $\bold{x}_0 \in B_R$ is also explicitly given as $\frac13 (R - \|\bold{x}_0\|)$, and it looks more improved than the previous one in \cite[Theorem 10, Section 2.2]{E10} and \cite[Theorem 1.14]{HL11}, which also could be realized by our estimate \eqref{uniformderl1} (see Remark \ref{improradiu}).\\
In summary, without using the Poisson integral formula, we have independently proven the existence and regularity of the solution to the Dirichlet problem for Laplace's equations. Additionally, we have explicitly constructed a harmonic polynomial that converges to our solution $\widetilde{u}$ and demonstrated a specific convergence rate. The advantage of our method lies in deriving non-trivial high-order uniform estimates via the $L^1$-boundary data (\eqref{uniformderl1} in Theorem \ref{mainthmintro}(i)) and it allows us to achieve improved results for the constants of the right-hand side in the high-order uniform estimates in Corollary \ref{improconst}. Moreover, we obtain a refined convergence region and the error estimates within this region for the remainder of the Taylor series of our constructed solution (\eqref{analytiestim} in Theorem \ref{mainthmintro}). These have an impact on the theory of partial differential equations and numerical analysis in PDEs. \\
Our paper is organized as follows. Section \ref{notandcon} introduces the notation and conventions used throughout this paper. In Section \ref{cartentrans}, we rigorously define transformations between Cartesian and polar coordinates. Section \ref{subsecpola} discusses the Laplace operator in polar coordinates. In Section \ref{detercoef}, we study classical results on the uniform convergence of the Fourier series developed by Dunham Jackson, which plays a key role in deriving the convergence rates of our uniform approximation. In Section \ref{constharpol}, we convert functions defined in polar coordinates to Cartesian coordinates and extend them to harmonic polynomials. In Section \ref{uniforconpol}, we deal with uniform approximation by harmonic polynomials based on summation by parts formula. Then, in Section \ref{refsec}, we show that the uniform limit of the harmonic polynomial is a unique solution to \eqref{maineq} and further discuss our main uniform estimates with arbitrary order via $L^1$-boundary data. In the final section, we present the fact (Theorem \ref{mainthmintro}(iv)) that the convergence rate accelerates when the boundary data is more smoothed.

\section{Notations and conventions} \label{notandcon}
This section briefly introduces the notations and conventions we mainly use in this paper. $\Delta$ denotes the Laplace operator for  twice continuously differentiable function $f$ on $\mathbb{R}^2$. In particular, it is expressed as $\Delta f = \frac{\partial^2 f}{\partial x^2} + \frac{\partial^2 f}{\partial y^2}$.
For $R>0$ and $\mathbf{x}_0 \in \mathbb{R}^2$, we define
\begin{align*}
B_R(\mathbf{x}_0):=\{\mathbf{x} \in \mathbb{R}^2 : \|\mathbf{x}-\mathbf{x}_0 \| < R\},\\
\partial B_R(\mathbf{x}_0):=\{\mathbf{x} \in \mathbb{R}^2 : \|\mathbf{x}-\mathbf{x}_0 \| = R\},\\
\overline{B}_R(\mathbf{x}_0):=\{\mathbf{x} \in \mathbb{R}^2 : \|\mathbf{x}-\mathbf{x}_0 \| \leq R\},
\end{align*}
where $\|\cdot \|$ denotes the Euclidean norm. Specifically, we write $B_R:=B_R(\mathbf{0})$, $\partial B_R:=\partial B_R(\mathbf{0})$, and $\overline{B}_R:=\overline{B}_R(\mathbf{0})$, where $\bold{0}=(0,0)$ denotes the origin in $\mathbb{R}^2$. We define $\overline{B}_0:=\{\mathbf{0}\}$.
For $\alpha_1,\alpha_2 \in \mathbb{N} \cup \{0\}$, $U \subseteq \mathbb{R}^2$, and $\alpha_1+\alpha_2$-times continuously differentiable function $f: U \rightarrow \mathbb{R}$, we define
$$
D^{(\alpha_1,\alpha_2)}f := \frac{\partial^{\alpha_1}}{\partial x^{\alpha_1}}\frac{\partial^{\alpha_2}}{\partial y^{\alpha_2}}f.
$$
Here, note that the order of differentiation can be interchanged by Clairaut's theorem.
From now on, let $U$ be a bounded open subset of $\mathbb{R}^d$ with $d \geq 1$. Define a function space $C(\overline{U})$ as
\begin{align*}
C(\overline{U}):=\{f:\overline{U} \rightarrow \mathbb{R} : f \text{ is continuous}\}
\end{align*}
with the norm
$$
\|f\|_{C(\overline{U})} := \max_{x \in \overline{U}}|f(x)|.
$$
For $k \in \mathbb{N}$, we define functions spaces $C^k(U)$ and $C^k(\overline{U})$ as
\begin{align*}
&C^k(U):=\{f:U \rightarrow \mathbb{R} : f \text{ is $k$-times continuously differentiable}\}, \\
&C^k(\overline{U}):=\{f \in C^k(U): D^{(\alpha_1,\alpha_2)}f \text{ is uniformly continuous on $U$ for all $\alpha_1,\alpha_2 \in \mathbb{N}\cup\{0\}$ with $\alpha_1+\alpha_2 \leq k$} \}.
\end{align*}
For each $f \in C^k(\overline{U})$ with $k \in \mathbb{N}$, define 
$$
\|f\|_{C^k(\overline{U})}:=\sum_{\alpha_1+\alpha_2 \leq k} \|D^{(\alpha_1, \alpha_2)} f \|_{C(\overline{U})}.
$$
Then $f$ and every $k$-th partial derivative of $f$ on $U$ continuously extends to $\overline{U}$.
Denote by $C_0^{\infty}(U)$ the set of all smooth functions with compact support in $U$. We define a function space $C^\alpha(\overline{U})$ as
$$
C^\alpha(\overline{U}):=\left\{ f \in C(\overline{U}) : \sup_{x,y \in U}\frac{|f(x)-f(y)|}{\|x-y\|^\alpha} < \infty \right \}
$$
with the norm
$$
\|f\|_{C^\alpha(\overline{U})} := \|f\|_{C(\overline{U})} + [f]_{C^\alpha(\overline{U})},
$$
where H\"older seminorm $[\cdot]_{C^\alpha(\overline{U})}$ is defined by
$$
[f]_{C^\alpha(\overline{U})} := \sup_{x,y \in \overline{U}}\frac{|f(x)-f(y)|}{\|x-y\|^\alpha}.
$$
For $k \in \mathbb{N}$, $0< \alpha \leq 1$, we define 
$$
C^{k,\alpha}(\overline{U}):=\{ f \in C^k(\overline{U}) : D^{(\alpha_1,\alpha_2)}f \in C^\alpha(\overline{U}) \text{ for all $\alpha_1,\alpha_2 \in \mathbb{N}\cup\{0\}$ with $\alpha_1+\alpha_2 = k$} \}.
$$

\section{Solving the problem in the polar coordinates} 
\subsection{Coordinate transformations} \label{cartentrans}
We will rigorously solve the equation \eqref{maineq} defined in the Cartesian coordinate system by transforming its coordinate to the polar coordinate system, and finally, we will transform it back to the Cartesian coordinate system to obtain a solution to \eqref{maineq}. As a result, the open disk centered at the origin, excluding a line segment in the Cartesian coordinate system, can be transformed into an open rectangle in the polar coordinate system. The main advantage is that one may easily handle the boundary data $g$ as a one-variable function. Although roughly solving \eqref{maineq} using transformation via trigonometric functions is well-known in many PDE textbooks, one has to solve \eqref{maineq} rigorously 
because there is a half-line section (we will write it as $S$) where we cannot find a good one-to-one correspondence between the two coordinate systems. Our main task will be filling up the half-line segment and making rigorous arguments.
\centerline{}
We define coordinate functions $\mathbf{x}:[0,\infty) \times \mathbb{R} \rightarrow \mathbb{R}^2$ and $\bold{y}:[0,\infty) \times \mathbb{R} \rightarrow \mathbb{R}^2$ as 
$$
\bold{x}(r,\theta):=r\cos\theta, \quad  \bold{y}(r,\theta):=r\sin\theta, \quad (r, \theta) \in [0, \infty) \times \mathbb{R}.
$$
Then the functions $\mathbf{x}$ and $\mathbf{y}$ are well-defined and smooth. Define our half line segment $S$ by 
$$
S:=\{(x,y) \in \mathbb{R}:-\infty < x \leq 0, \; y=0\}. 
$$
Then, the function $(\mathbf{x},\mathbf{y})$ is bijective from $(0, \infty) \times (-\pi,\pi)$ to $\mathbb{R}^2 \setminus S$, and hence it has an inverse. Define functions $\mathbb{R}^2 \rightarrow [0,R]$ and $\Theta:\mathbb{R}^2 \setminus S \rightarrow (-\pi,\pi)$ given by
$$
\bold{r}(x,y)=\sqrt{x^2+y^2}, \;\;(x,y) \in \mathbb{R}^2, \qquad 
\Theta(x, y)=\begin{cases}
\arctan \frac{y}{x} & \text{ if } x>0,  \\
-\arctan \frac{x}{y}+\frac{\pi}{2} & \text{ if } x\leq 0, \ y>0, \\
-\arctan \frac{x}{y}-\frac{\pi}{2} & \text{ if } x\leq 0, \ y<0.  
\end{cases}
$$
Then we observe that
$$
(\bold{x},\bold{y}) \circ (\bold{r}, \Theta) = id, \quad \text{ on \,$\mathbb{R}^2 \setminus S$}, 
$$
i.e.
\begin{equation} \label{inv1}
\bold{x}(\bold{r}(x,y), \Theta(x,y)) = x, \quad  \; \bold{y}(\bold{r}(x,y), \Theta(x,y)) =y, \quad \text{for all $(x,y) \in \mathbb{R}^2 \setminus S$},
\end{equation}
and that
$$
(\bold{r}, \Theta) \circ (\bold{x},\bold{y})  = id, \quad  \; \text{ on  $(0, \infty) \times (-\pi, \pi)$},
$$
i.e.
\begin{equation} \label{inv2}
\bold{r}\Big(\bold{x}(r, \theta), \bold{y}(r, \theta)\Big)=r, \;\; \;  \Theta\Big(\bold{x}(r, \theta), \bold{y}(r, \theta)\Big)=\theta, \quad \text{ for all $(r,\theta) \in (0, \infty) \times (-\pi, \pi)$}.
\end{equation}

\subsection{Solutions in the polar coordinate}  \label{subsecpola}
Given $u \in C^2(B_R\setminus S) \cap C(\overline{B}_R\setminus S )$, define 
\begin{equation} \label{defnv}
v(r,\theta):=u(\bold{x}(r,\theta),\bold{y}(r,\theta)), \quad (r,\theta) \in (0,R] \times (-\pi, \pi).
\end{equation}
For our boundary data $g \in C(\partial B_R)$ in \eqref{maineq}, we define a function $f:\mathbb{R} \rightarrow \mathbb{R}$ given by
\begin{equation} \label{ffunc}
f(\theta):=g(\bold{x}(R,\theta),\bold{y}(R,\theta))=g(R \cos \theta, R \sin \theta), \qquad \theta \in \mathbb{R}.
\end{equation}
Then, it follows from \eqref{inv1} that
\begin{align*}
u(x,y)&=v(\bold{r}(x,y), \Theta(x,y)), \quad \text{ for all } (x,y) \in \overline{B}_R \setminus S, \\
g(x,y)&=f(\Theta(x,y)), \quad \text{ for all } (x,y) \in \partial B_R \setminus \{(-R,0)\}.
\end{align*}
Now, we apply the chain rule to the function $v$ defined in \eqref{defnv}. \, For each $(r,\theta) \in (0,R) \times (-\pi, \pi)$, we have
\begin{align*}
\frac{\partial v}{\partial \theta}(r,\theta)&=\frac{\partial u}{\partial x}(\bold{x}(r,\theta),\bold{y}(r,\theta))(-r\sin\theta)+\frac{\partial u}{\partial y}(\bold{x}(r,\theta),\bold{y}(r,\theta))(r\cos\theta),
\\
\frac{\partial^2 v}{{\partial \theta}^2}(r,\theta)&=\frac{\partial^2 u}{{\partial x}^2}(\bold{x}(r,\theta),\bold{y}(r,\theta))(-r\sin\theta)^2+\frac{\partial u}{\partial x}(\bold{x}(r,\theta),\bold{y}(r,\theta))(-r\cos\theta) \\
&\qquad +\frac{\partial^2 u}{{\partial y}^2}(\bold{x}(r,\theta),\bold{y}(r,\theta))(r\cos\theta)^2+\frac{\partial u}{\partial y}(\bold{x}(r,\theta),\bold{y}(r,\theta))(-r\sin\theta)
\\
&\qquad \qquad  + \frac{\partial^2 u}{\partial y \partial x}(\bold{x}(r, \theta), \bold{y}(r, \theta)) (-r^2 \sin \theta \cos \theta), \\
\frac{\partial v}{\partial r}(r,\theta)&=\frac{\partial u}{\partial x}(\bold{x}(r,\theta),\bold{y}(r,\theta))(\cos\theta)+\frac{\partial u}{\partial y}(\bold{x}(r,\theta),\bold{y}(r,\theta))(\sin\theta),
\\
\frac{\partial^2 v}{{\partial r}^2}(r,\theta)&=\frac{\partial^2 u}{{\partial x}^2}(\bold{x}(r,\theta),\bold{y}(r,\theta))(\cos\theta)^2+\frac{\partial^2 u}{{\partial y}^2}(\bold{x}(r,\theta),\bold{y}(r,\theta))(\sin\theta)^2\\
& \qquad \qquad  + \frac{\partial^2 u}{\partial y \partial x}(\bold{x}(r, \theta), \bold{y}(r, \theta)) (\sin \theta \cos \theta).
\end{align*}
Therefore, we get
\begin{align*}
\frac{1}{r^2}\frac{\partial^2 v}{{\partial\theta}^2}(r,\theta)+\frac{\partial^2 v}{\partial r^2}(r, \theta) + \frac{1}{r} \frac{\partial v}{\partial r}(r,\theta)=\Delta u(\bold{x}(r,\theta),\bold{y}(r,\theta)), \quad \forall (r,\theta) \in (0,R) \times (-\pi, \pi).
\end{align*}
Substituting \eqref{inv1} to this equation, we obtain for any  $(x,y) \in B_R(\mathbf{0}) \setminus S$
\begin{align} \label{impoiden}
\Delta u(x,y)&=\frac{1}{\bold{r}(x,y)^2} \frac{\partial^2 v}{\partial \theta^2} (\bold{r}(x,y), \Theta(x,y)) + \frac{\partial^2 v}{\partial r^2}(\bold{r}(x,y), \Theta(x,y)) + \frac{1}{\bold{r}(x,y)} \frac{\partial v}{\partial r}(\bold{r}(x,y), \Theta(x,y)).
\end{align}
Therefore, the original problem \eqref{maineq} is now transformed into the following problem:
\begin{equation} \label{polareq0}
\left\{
\begin{alignedat}{2}
\frac{1}{r^2}\frac{\partial^2 v}{{\partial \theta}^2}(r, \theta)+\frac{\partial^2 v}{{\partial r}^2}(r, \theta)+\frac{1}{r}\frac{\partial v}{\partial r}(r, \theta) &=0&& \quad \mbox{if $(r, \theta) \in (0,R) \times (-\pi, \pi)$}\\
v(R,\theta) &= f(\theta) &&\quad \mbox{if \;\;\;$\theta \in (-\pi, \pi)$}.
\end{alignedat} \right.
\end{equation} 
\noindent
Now we aim to find a solution to \eqref{polareq0}. To do it, let us postulate that $\hat{v}(r,\theta)=L(r)\Theta(\theta)$ on $[0,\infty) \times \mathbb{R}$ where $\hat{v}$ is bounded on $[0, R]\times \mathbb{R}$ and $\Theta$ is a periodic function with $2\pi$-period and assume that
\begin{equation} \label{polardiff}
\frac{1}{r^2}\frac{\partial^2 \hat{v}}{{\partial \theta}^2}+\frac{\partial^2 \hat{v}}{{\partial r}^2}+\frac{1}{r}\frac{\partial \hat{v}}{\partial r} =0  \qquad \mbox{in $(0,\infty) \times \mathbb{R}$},
\end{equation}
which is equivalent to the fact that
$$
\frac{1}{r^2}L(r)\Theta''(\theta)+L''(r)\Theta(\theta)+\frac{1}{r}L'(r)\Theta(\theta)=0, \quad \text{ for any $(r, \theta) \in (0, \infty) \times \mathbb{R}$},
$$
and hence
$$
\frac{r^2 L''(r)+rL'(r)}{L(r)}=-\frac{\Theta''(\theta)}{\Theta(\theta)}.
$$
In the above, the right-hand side is independent of $r$, and the left-hand side is independent of $\theta$, so that they are equal to some constant, let us call it $\lambda$. Thus, we obtain the following two equations:
\begin{equation*}
r^2 L''(r)+rL'(r)-\lambda L(r)=0, \quad r \in (0, \infty), \qquad \Theta''(\theta)+\lambda \Theta(\theta)=0, \quad \theta \in \mathbb{R}.
\end{equation*}
Now consider three cases.
\begin{itemize}
\item[(a)] If $\lambda<0$, then there exists $\beta>0$ so that $\Theta''(\theta)-\beta^2\Theta(\theta)=0$, for all $\theta \in (0, \infty)$. Thus, we can find the general solution as
$$
\Theta(\theta)=\bar{c}_1 e^{\beta\theta}+\bar{c}_2 e^{-\beta\theta},  \quad \theta \in \mathbb{R},
$$
where  $\bar{c}_1,\bar{c}_2 \in \mathbb{R}$ are constants.
Since $\Theta$ is a periodic function with the $2\pi$-period, we get $\bar{c}_1=\bar{c}_2=0$, so that $\hat{v}=0$.
\item[(b)]
If $\lambda=0$, then $\Theta''(\theta)=0$ for all $\theta \in \mathbb{R}$ and $r^2L''(r)+rL'(r)=0$ for all $r \in (0, \infty)$. We can find the solution as
$$
L(r)=\bar{d}_1+\bar{d}_2\ln r, \quad \Theta(\theta)=\bar{c}_1+\bar{c}_2\theta, \quad \text{ $(r, \theta) \in (0, \infty) \times \mathbb{R}$},
$$
where $\bar{c}_1,\bar{c}_2,\bar{d}_1,\bar{d}_2 \in \mathbb{R}$ are constants. Since $\Theta$ is a periodic function and $\hat{v}$ is a bounded on $[0,R]$, we get $\bar{c}_2=\bar{d}_2=0$, and hence $v$ is a constant function.
\item[(c)] If $\lambda>0$, then there exists $\beta>0$ such that $r^2L''(r)+rL'(r)-\beta^2L(r)=0$ for all $r \in (0, \infty)$ and $\Theta''(\theta)+\beta^2\Theta(\theta)=0$ for all $\theta \in \mathbb{R}$. We hence obtain that
$$
L(r)=\bar{d}_1r^\beta+\bar{d}_2r^{-\beta}, \quad \Theta(\theta)=\bar{c}_1\sin\beta\theta+\bar{c}_2\cos\beta\theta, \quad \text{ $(r, \theta) \in (0, \infty) \times \mathbb{R}$},
$$
where $\bar{c}_1,\bar{c}_2,\bar{d}_1,\bar{d}_2 \in \mathbb{R}$ are constants. Since $v$ is bounded and $\Theta$ is a periodic function with $2\pi$-period, it satisfies the assumption only when $\bar{d}_2=0$ and $\beta$ is a positive integer. Thus, 
$$
L(r)= \bar{d}_1 r^n \; \text{ and }\; \Theta(\theta)= \bar{c}_1 \sin n \theta + \bar{c}_2 \cos n \theta, \quad (r, \theta) \in (0, \infty) \times \mathbb{R}.
$$
\end{itemize}
\text{}
Therefore, we can see that
$$
\hat{v}(r,\theta)=r^n(\bar{c}_1\sin n\theta+\bar{c}_2\cos n\theta), \quad \text{ $(r, \theta) \in [0, \infty) \times \mathbb{R}$}
$$
fulfills \eqref{polardiff}. Note that $\hat{v}$ is not the only function satisfying \eqref{polardiff}, and we can obtain infinitely many functions as linear combinations of them. 
Precisely, for each $n \in \mathbb{N}$ we define a function $\hat{v}_n: [0,\infty) \times \mathbb{R} \rightarrow \mathbb{R}$ given by
\begin{equation} \label{defnvn}
\hat{v}_n(r,\theta)=\frac{\hat{c}_0}{2}+\sum_{k=1}^n r^k(\hat{c}_k\cos k\theta+\hat{d}_k\sin k\theta), \quad (r,\theta) \in [0,\infty) \times \mathbb{R},
\end{equation}
where $\hat{c}_k$ and $\hat{d}_k$ are (indetermined) constants for $k=0,1,2,...,n$. Then, \eqref{polardiff} is fulfilled where $\hat{v}$ is replaced by $\hat{v}_n$.

\subsection{Fourier series and Jackson's theorem} \label{detercoef}

Since $\hat{v}_n$ in \eqref{defnvn}  looks similar to $n$-th partial sum of Fourier series, we expect to get $v$ as a limit of $\hat{v}_n$ defined in \eqref{defnvn}
which fulfills
$$
v(R,\theta)=f(\theta), \quad \theta \in [-\pi,\pi],
$$
where $f$ is defined as in \eqref{ffunc}, by choosing suitable coefficients $c_k$ and $d_k$.

\noindent
Now we introduce a classical result developed by Dunham Jackson, which guarantees that the Fourier series of a regular continuous function converges uniformly with a certain convergence rate.
\begin{lemma}{\cite[page 22, Corollary II]{jack94}} \label{jackson}
Let $f$ be of $C([-\pi,\pi])$ and be periodic with $2\pi$-period.
Define
\begin{equation} \label{fouridefn}
S_n(f)(\theta) := \frac{\tilde{c}_0}{2}+\sum_{k=1}^{n} (\tilde{c}_k\cos k\theta+\tilde{d}_k \sin k\theta), \qquad \theta \in \mathbb{R}
\end{equation}
and
$$
\tilde{c}_k:=\frac{1}{\pi}\int_{-\pi}^{\pi}f(\theta)\cos k\theta d\theta, \;\;\quad \tilde{d}_k:=\frac{1}{\pi}\int_{-\pi}^{\pi}f(\theta)\sin k\theta d\theta, \quad k \geq 0.
$$
Then, for any $\theta \in [-\pi, \pi]$ and $n \in \mathbb{N}$, it holds that
$$
|f(\theta)-S_n(f)(\theta)| \leq \gamma_0 \ln n \cdot \omega_f\left(\frac{2\pi}{n}\right),
$$
where $\gamma_0$ is a universal constant and
$\omega_f$ is defined as
$$
\omega_f(\delta) := \sup_{\substack{\theta_1,\theta_2 \in [-\pi,\pi] \\ |\theta_1-\theta_2|\leq \delta}}|f(\theta_1)-f(\theta_2)|, \quad \delta>0.
$$
\end{lemma}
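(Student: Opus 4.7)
The plan is to prove this via the classical Dirichlet kernel approach. First I would recall that the Fourier partial sum can be written as a convolution, namely
$$
S_n(f)(\theta) = \frac{1}{2\pi}\int_{-\pi}^{\pi} f(\theta+t) D_n(t)\,dt, \qquad D_n(t)=\sum_{k=-n}^n e^{ikt} = \frac{\sin\bigl((n+\tfrac12)t\bigr)}{\sin(t/2)},
$$
where the representation comes from plugging the integral formulas for $\tilde c_k,\tilde d_k$ into \eqref{fouridefn} and collapsing the finite trigonometric sum by the standard geometric-series identity. Since $\frac{1}{2\pi}\int_{-\pi}^{\pi} D_n(t)\,dt=1$ (a direct consequence of the same formula, as all nonzero frequencies integrate to zero), I would rewrite the error as
$$
f(\theta)-S_n(f)(\theta) = \frac{1}{2\pi}\int_{-\pi}^{\pi} \bigl[f(\theta)-f(\theta+t)\bigr] D_n(t)\,dt.
$$

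Next, I would control the integrand using the modulus of continuity. The bound $|f(\theta)-f(\theta+t)|\le \omega_f(|t|)$, together with the elementary subadditivity property $\omega_f(\lambda\delta)\le(\lambda+1)\omega_f(\delta)$ for $\lambda>0$ applied with $\delta=2\pi/n$, gives
$$
|f(\theta)-S_n(f)(\theta)| \le \frac{\omega_f(2\pi/n)}{2\pi}\int_{-\pi}^{\pi}\left(\frac{n|t|}{2\pi}+1\right)|D_n(t)|\,dt.
$$
Thus the whole task reduces to the two kernel estimates $\int_{-\pi}^{\pi} |D_n(t)|\,dt = O(\ln n)$ (the classical Lebesgue constant bound) and $\int_{-\pi}^{\pi} |t|\,|D_n(t)|\,dt = O(1)$. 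For each, I would split the integral at $|t|=1/n$: on the central piece the trivial bound $|D_n(t)|\le 2n+1$ contributes $O(1)$ to both integrals, while on the outer piece the estimate $|\sin(t/2)|\ge |t|/\pi$ yields $|D_n(t)|\le \pi/|t|$, which integrates to $O(\ln n)$ in the first case and to $O(1)$ in the second.

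The main obstacle is the sharp logarithmic estimate of the Lebesgue constant $\|D_n\|_{L^1}$; everything else is bookkeeping. Combining the two kernel bounds with the previous display produces a constant $\gamma_0$, independent of $f$, $n$, and $\theta$, such that $|f(\theta)-S_n(f)(\theta)|\le \gamma_0\,\omega_f(2\pi/n)\,\ln n$, as required. Since the statement is cited directly from Jackson's monograph, in the paper itself I would simply refer the reader to the original proof rather than reproduce this argument in detail.
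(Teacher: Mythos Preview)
Your direct Dirichlet-kernel argument has a genuine gap. The displayed bound
\[
|f(\theta)-S_n(f)(\theta)| \le \frac{\omega_f(2\pi/n)}{2\pi}\int_{-\pi}^{\pi}\Bigl(\frac{n|t|}{2\pi}+1\Bigr)|D_n(t)|\,dt
\]
is correct, but the integral on the right splits as $\tfrac{n}{2\pi}\int |t|\,|D_n(t)|\,dt + \int |D_n(t)|\,dt$. Your own estimate $\int |t|\,|D_n(t)|\,dt = O(1)$ then forces the first term to be of order $n$, not $\ln n$, so the conclusion $|f-S_n(f)|\le \gamma_0\, \omega_f(2\pi/n)\ln n$ does not follow from what you wrote. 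What your argument actually yields is $|f-S_n(f)|\le C\,n\,\omega_f(2\pi/n)$, which for $f\in C^\alpha$ gives only $O(n^{1-\alpha})$ and does not even tend to zero unless $\alpha=1$.

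The missing ingredient is Jackson's theorem on \emph{best} trigonometric approximation: one first proves $E_n(f)\le C\,\omega_f(2\pi/n)$ using a positive summability kernel (Jackson's kernel, or an iterated Fej\'er kernel), and only then brings in the Dirichlet kernel via Lebesgue's inequality $\|f-S_n(f)\|_\infty\le (1+L_n)E_n(f)$ with $L_n=\tfrac{1}{2\pi}\int|D_n|=O(\ln n)$. The logarithm arises from the Lebesgue constant acting on the already small quantity $E_n(f)$, not from integrating $\omega_f(|t|)$ against $|D_n|$ directly. This two-step route is exactly what the cited Corollary~II in Jackson's monograph does. In the paper itself the lemma is simply quoted from that reference without proof, so your closing remark about deferring to the original source matches what the authors actually do.
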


\begin{corollary} 
Let $f$ be of $C^{\alpha}([-\pi,\pi])$ for some $\alpha \in (0,1]$ and be periodic with period $2\pi$. Then, 
$$
|f(\theta)-S_n(f)(\theta)| \leq  (2\pi)^{\alpha} \gamma_0 [f]_{C^{\alpha}([-\pi, \pi])}\cdot  \left( \frac{1}{n} \right)^{\alpha} \ln n,
$$
where $S_n(f)$ is defined as in \eqref{fouridefn} and $\gamma_0$ is a constant as in Lemma \ref{jackson}. Thus, $S_n(f)$ converges to $f$ uniformly on $[-\pi, \pi]$ as $n \rightarrow \infty$.
\end{corollary}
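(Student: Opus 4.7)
The plan is to deduce this corollary as an immediate consequence of Lemma \ref{jackson}, the only real work being the translation of the modulus of continuity $\omega_f$ into the H\"older seminorm. First I would invoke Lemma \ref{jackson} directly, which gives, for every $\theta \in [-\pi,\pi]$ and every $n \in \mathbb{N}$,
$$
|f(\theta)-S_n(f)(\theta)| \leq \gamma_0 \ln n \cdot \omega_f\!\left(\frac{2\pi}{n}\right).
$$
This reduces the problem to bounding $\omega_f(2\pi/n)$ in terms of the H\"older data.

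Next, I would use the defining property of the H\"older seminorm: for any $\theta_1, \theta_2 \in [-\pi,\pi]$ one has $|f(\theta_1)-f(\theta_2)| \leq [f]_{C^\alpha([-\pi,\pi])} |\theta_1-\theta_2|^\alpha$. Taking the supremum over all pairs with $|\theta_1-\theta_2| \leq \delta$ immediately yields $\omega_f(\delta) \leq [f]_{C^\alpha([-\pi,\pi])} \delta^\alpha$. Specializing to $\delta = 2\pi/n$ gives $\omega_f(2\pi/n) \leq (2\pi)^\alpha [f]_{C^\alpha([-\pi,\pi])} \left(\tfrac{1}{n}\right)^\alpha$, and substituting this into the Jackson estimate produces the claimed bound verbatim.

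Finally, the uniform convergence assertion follows because the right-hand side $(2\pi)^\alpha \gamma_0 [f]_{C^\alpha([-\pi,\pi])} (\ln n) n^{-\alpha}$ is independent of $\theta$ and tends to $0$ as $n \to \infty$ (since $\alpha>0$ forces $n^{-\alpha} \ln n \to 0$). There is no genuine obstacle in this proof; it is a one-line combination of Jackson's theorem with the trivial inequality $\omega_f(\delta) \leq [f]_{C^\alpha} \delta^\alpha$, and the only care needed is in tracking the constant $(2\pi)^\alpha$ coming from $\delta = 2\pi/n$.
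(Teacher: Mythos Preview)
Your proposal is correct and matches the paper's proof essentially line for line: the paper also just observes $\omega_f(\delta) \leq [f]_{C^\alpha([-\pi,\pi])}\delta^\alpha$ from the definition of the H\"older seminorm and then invokes Lemma~\ref{jackson}. Your added remark that $(\ln n)n^{-\alpha}\to 0$ to justify uniform convergence is a harmless elaboration the paper leaves implicit.
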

\begin{proof}
By the definition of the H\"older continuity,
$$
|f(\theta_1)-f(\theta_2)| \leq [f]_{C^\alpha([-\pi, \pi])}|\theta_1-\theta_2|^\alpha, \quad \theta_1,\theta_2 \in [-\pi,\pi],
$$
so that $w_f(\delta) \leq [f]_{C^{\alpha}([-\pi, \pi])} \delta^{\alpha}$ for all $\delta>0$.
Thus, the assertion follows from Lemma \ref{jackson}.
\end{proof}
\centerline{}
\noindent
Let us note that in the original problem \eqref{maineq}, $f$ defined in \eqref{ffunc} does not explicitly appear. So it would be better to give the H\"{o}lder continuity assumption to $g$ rather than $f$. In the following proposition, we will investigate the relation between the H\"older continuity of $g$ and $f$.
\begin{proposition} \label{holestim}
Let $g \in C^{\alpha}(\partial B_R)$ for some $\alpha \in (0, 1]$ and $f$ be defined as in \eqref{ffunc}. Then, $f \in C^\alpha([-\pi,\pi])$ and
\begin{equation*} \label{Calphanorm}
[f]_{C^\alpha([-\pi,\pi])} \leq R^{\alpha}[g]_{C^\alpha(\partial B_R)}.
\end{equation*}
\end{proposition}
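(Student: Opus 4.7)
The plan is to reduce the H\"older seminorm bound for $f$ on $[-\pi,\pi]$ to the H\"older seminorm bound for $g$ on $\partial B_R$ by controlling the chord length between two points on $\partial B_R$ by the corresponding difference of angles. Concretely, for arbitrary $\theta_1,\theta_2\in[-\pi,\pi]$, I will fix the two boundary points $\mathbf{p}_i:=(R\cos\theta_i, R\sin\theta_i)\in\partial B_R$ and start from the identity
\[
|f(\theta_1)-f(\theta_2)|=|g(\mathbf{p}_1)-g(\mathbf{p}_2)|\le [g]_{C^\alpha(\partial B_R)}\,\|\mathbf{p}_1-\mathbf{p}_2\|^{\alpha}.
\]

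The key computation is the chord-length estimate. I would expand
\[
\|\mathbf{p}_1-\mathbf{p}_2\|^2 = R^2\bigl[(\cos\theta_1-\cos\theta_2)^2+(\sin\theta_1-\sin\theta_2)^2\bigr] = 2R^2\bigl(1-\cos(\theta_1-\theta_2)\bigr) = 4R^2\sin^2\!\tfrac{\theta_1-\theta_2}{2},
\]
so that $\|\mathbf{p}_1-\mathbf{p}_2\|=2R\bigl|\sin\tfrac{\theta_1-\theta_2}{2}\bigr|$. Then the elementary inequality $|\sin t|\le |t|$ applied with $t=(\theta_1-\theta_2)/2$ gives $\|\mathbf{p}_1-\mathbf{p}_2\|\le R|\theta_1-\theta_2|$.

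Plugging this back in and raising to the power $\alpha$, I obtain
\[
|f(\theta_1)-f(\theta_2)|\le [g]_{C^\alpha(\partial B_R)}\,R^{\alpha}|\theta_1-\theta_2|^{\alpha},
\]
which is exactly the desired bound after taking the supremum over $\theta_1,\theta_2\in[-\pi,\pi]$. Finiteness of $\|f\|_{C(\overline{[-\pi,\pi]})}$ is inherited from the continuity of $g$ on the compact set $\partial B_R$, so $f\in C^\alpha([-\pi,\pi])$. There is essentially no obstacle here; the only minor point is cleanly separating the chord-versus-arc estimate so that the factor $R^\alpha$ appears with the sharp constant $1$ in front, which is why I would isolate $|\sin t|\le |t|$ rather than invoke a cruder bound such as $|\sin t|\le 1$.
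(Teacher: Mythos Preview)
Your proof is correct and follows essentially the same approach as the paper: both use the chord-length identity $\|\mathbf{p}_1-\mathbf{p}_2\|=2R\bigl|\sin\tfrac{\theta_1-\theta_2}{2}\bigr|$ together with $|\sin t|\le |t|$ to bound the distance by $R|\theta_1-\theta_2|$, then apply the H\"older condition on $g$.
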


\begin{proof}
For each $\theta_1,\theta_2 \in [-\pi,\pi]$, it holds that
\begin{align*}
|f(\theta_1)-f(\theta_2)| &= \Big|g(R\cos\theta_1,R\sin\theta_1)-g(R\cos\theta_2,R\sin\theta_2)\Big| \\
&\leq [g]_{C^\alpha(\partial B_R)}\left(\sqrt{(R\cos\theta_1-R\cos\theta_2)^2+(R\sin\theta_1-R\sin\theta_2)^2}\right)^\alpha \\
&= [g]_{C^\alpha( \partial B_R)}\left(\sqrt{2R^2-2R^2\cos(\theta_1-\theta_2)}\right)^\alpha \\
&=(2R)^{\alpha}[g]_{C^\alpha( \partial B_R)}\left(\sin\frac{|\theta_1-\theta_2|}{2}\right)^\alpha \leq  R^{\alpha} [g]_{C^\alpha(\partial B_R)} |\theta_1-\theta_2|^\alpha.
\end{align*}
Hence, the assertion follows.
\end{proof}
\centerline{}
All the results in Sections \ref{subsecpola}, \ref{detercoef} so far are summarized as follows.
\begin{proposition} \label{uniconth}
Let $g \in C^\alpha(\partial B_R)$ for some $\alpha \in (0,1]$ and $f$ be defined as in \eqref{ffunc}. (Then, $f \in C^{\alpha}([-\pi, \pi])$ with $[f]_{C^{\alpha}([-\pi, \pi])} \leq R^{\alpha} [g]_{C^{\alpha}(\partial B_R)}$ by Proposition \ref{holestim}.)
For each $k \geq 0$, define
\begin{equation} \label{ckdkdefn}
c_k:=\frac{1}{R^k\pi}\int_{-\pi}^{\pi} f(\theta)\cos k\theta d\theta, \;\quad d_k:=\frac{1}{R^k\pi}\int_{-\pi}^{\pi} f(\theta)\sin k\theta d\theta.
\end{equation}
Let
\begin{equation} \label{fourierdefn}
v_n(r,\theta)=\frac{c_0}{2}+\sum_{k=1}^n r^k(c_k\cos k\theta+d_k\sin k\theta), \quad (r,\theta) \in [0,\infty) \times \mathbb{R}.
\end{equation}
Then, 
\begin{equation} \label{polardiff2}
\frac{1}{r^2}\frac{\partial^2 v_n}{{\partial \theta}^2}(r, \theta)+\frac{\partial^2 v_n}{{\partial r}^2}(r, \theta)+\frac{1}{r}\frac{\partial v_n}{\partial r}(r, \theta) =0  \qquad \mbox{ for all  $(r, \theta) \in (0,\infty) \times \mathbb{R}$}.
\end{equation}
Moreover, it holds that
\begin{align*}
|f(\theta)-v_n(R, \theta)| &\leq (2\pi)^{\alpha} \gamma_0 [f]_{C^{\alpha}([-\pi, \pi])} \left( \frac1n \right)^{\alpha} \ln n.
\end{align*}
In particular, $v_n(R,\cdot)$ converges to $f$ uniformly on $[-\pi, \pi]$ as $n \rightarrow \infty$.
\end{proposition}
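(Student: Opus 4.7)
The plan is to reduce Proposition \ref{uniconth} entirely to two pieces of machinery already established in the paper: the separation-of-variables computation of Section \ref{subsecpola} (for the PDE part), and the H\"older version of Jackson's theorem together with Proposition \ref{holestim} (for the error estimate and uniform convergence). There is essentially no new analytical content to prove; the only real work is bookkeeping the factor $R^{-k}$ that is built into the definition \eqref{ckdkdefn} of $c_k,d_k$.

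First, I would verify \eqref{polardiff2}. By case (c) of Section \ref{subsecpola}, for every $k \geq 1$ each of the functions $(r,\theta)\mapsto r^k\cos k\theta$ and $(r,\theta)\mapsto r^k\sin k\theta$ satisfies the polar Laplace equation \eqref{polardiff} on $(0,\infty)\times\mathbb{R}$, and the constant $c_0/2$ satisfies it trivially. Since the polar Laplace operator is linear and the coefficients $c_k,d_k$ do not depend on $(r,\theta)$, the finite linear combination $v_n$ in \eqref{fourierdefn} also satisfies \eqref{polardiff2}. A short direct term-by-term computation can serve as a sanity check.

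Second, for the error estimate I would observe that evaluating \eqref{fourierdefn} at $r=R$ and inserting \eqref{ckdkdefn} yields $R^k c_k = \tfrac{1}{\pi}\int_{-\pi}^{\pi} f(\theta)\cos k\theta\,d\theta = \tilde{c}_k$ and $R^k d_k = \tilde{d}_k$, where $\tilde{c}_k,\tilde{d}_k$ are precisely the Fourier coefficients appearing in \eqref{fouridefn}. Hence
$$
v_n(R,\theta) = \frac{\tilde{c}_0}{2} + \sum_{k=1}^{n}\bigl(\tilde{c}_k\cos k\theta + \tilde{d}_k\sin k\theta\bigr) = S_n(f)(\theta).
$$

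Third, since $g\in C^{\alpha}(\partial B_R)$, Proposition \ref{holestim} gives $f\in C^{\alpha}([-\pi,\pi])$ with $[f]_{C^{\alpha}([-\pi,\pi])}\leq R^{\alpha}[g]_{C^{\alpha}(\partial B_R)}$; moreover $f$ is automatically $2\pi$-periodic because $\cos$ and $\sin$ are. The H\"older corollary to Jackson's theorem (immediately after Lemma \ref{jackson}) therefore yields
$$
|f(\theta) - v_n(R,\theta)| = |f(\theta) - S_n(f)(\theta)| \leq (2\pi)^{\alpha}\gamma_0 [f]_{C^{\alpha}([-\pi,\pi])}\left(\frac{1}{n}\right)^{\alpha}\ln n,
$$
which is exactly the claimed bound. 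Since the right-hand side is independent of $\theta$ and tends to zero as $n\to\infty$, uniform convergence of $v_n(R,\cdot)$ to $f$ on $[-\pi,\pi]$ follows at once.

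The only point requiring any care is the translation between the ``normalized'' coefficients $c_k,d_k$ used in \eqref{ckdkdefn} (which carry the factor $R^{-k}$ so that $v_n$ extends to a harmonic polynomial with the correct radial scaling) and the standard Fourier coefficients $\tilde{c}_k,\tilde{d}_k$ of $f$. Once the identity $v_n(R,\cdot)=S_n(f)$ is made explicit, the proposition is an immediate consequence of earlier results and there is no genuine obstacle.
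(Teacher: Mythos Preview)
Your proposal is correct and follows exactly the paper's approach: the proposition is presented in the paper as a summary of Sections \ref{subsecpola} and \ref{detercoef}, and your reduction to the separation-of-variables computation for \eqref{polardiff2} together with the identification $v_n(R,\cdot)=S_n(f)$ and the H\"older corollary to Lemma \ref{jackson} is precisely that summary made explicit. There is nothing to add.
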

\centerline{}

\section{Constructing a solution in the Cartesian coordinates} 
\subsection{Constructing harmonic polynomials} \label{constharpol}
Let $g \in C^\alpha(\partial B_R)$ for some $\alpha \in (0, 1]$ and $f$ be a function on $\mathbb{R}$ defined in \eqref{ffunc}. For each $n \in \mathbb{N}$, let $v_n$ be a function on $[0,\infty) \times \mathbb{R}$
defined in \eqref{fourierdefn}. For each $n \in \mathbb{N}$, we define a function $u_n$ on $\mathbb{R}^2 \setminus S$ by 
\begin{align}
u_n(x,y)&:=v_n(\mathbf{r}(x,y),\Theta(x,y)) \nonumber \\
&=\frac{c_0}{2}+\sum_{k=1}^{n} \mathbf{r}(x,y)^k(c_k\cos k\Theta(x,y)+d_k\sin k\Theta(x,y)), \quad (x,y) \in \mathbb{R}^2 \setminus S, \label{undefnser} 
\end{align}
where $c_k$ and $d_k$ are defined as in \eqref{ckdkdefn}. Note that for each $n \in \mathbb{N}$, $u_n \in C^2(\mathbb{R}^2 \setminus S)$ and it follows from \eqref{inv2} that
\begin{align} \label{harmoe}
u_n(\mathbf{x}(R,\theta),\mathbf{y}(R,\theta)) = v_n(R, \theta), \quad \text{  for all }\theta \in \mathbb{R}.
\end{align}
Moreover, by the calculation for \eqref{impoiden} and \eqref{polardiff2}, we obtain that
\begin{align} 
\Delta u_n(x,y)&=\frac{1}{\bold{r}(x,y)^2} \frac{\partial^2 v_n}{\partial \theta^2} (\bold{r}(x,y), \Theta(x,y)) \nonumber \\
&\quad + \frac{\partial^2 v_n}{\partial r^2}(\bold{r}(x,y), \Theta(x,y)) + \frac{1}{\bold{r}(x,y)} \frac{\partial v_n}{\partial r}(\bold{r}(x,y), \Theta(x,y)) =0 \qquad \forall (x,y)\in \mathbb{R}^2 \setminus S. \label{harmonicun}
\end{align} 
We aim to make a continuous extension of $u_n$ on $\mathbb{R}^2$.
 First, let us derive the expression of $\cos \Theta(x,y)$ and $\sin \Theta(x,y)$. For each $(x,y) \in \mathbb{R}^2 \setminus S$, we obtain that
\begin{align}
\cos \Theta(x,y)
&=\frac{x}{\sqrt{x^2+y^2}}, \label{coskthdouble}
\end{align}
and that
\begin{align}
\sin \Theta(x,y)
&=\frac{y}{\sqrt{x^2+y^2}}. \label{sinkthdouble}
\end{align}
We will use these expressions to extend $\sin k\Theta(x,y)$ and $\cos k\Theta(x,y)$ on $\mathbb{R}^2 \setminus \{  \bold{0}\}$ for each $k \geq 1$.
The following is the $k$-double angle formula for trigonometric functions. We left the state and its proof for the reader's accessibility.
\begin{lemma} \label{lemdemoiv}
For any $\theta \in \mathbb{R}$ and $k \in \mathbb{N}$ it holds that
\begin{align*}
\cos k\theta=\sum_{j \; \text{even}  \atop 0 \leq j \leq k}^{}(-1)^\frac{j}{2}\binom{k}{j}\cos^{k-j}\theta\sin^{j}\theta, \\
\sin k\theta=\sum_{j \; \text{odd}  \atop 0 \leq j \leq k}^{}(-1)^\frac{j-1}{2}\binom{k}{j}\cos^{k-j}\theta\sin^{j}\theta.
\end{align*}
\end{lemma}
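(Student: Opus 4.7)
The plan is to derive both identities in a single stroke from De Moivre's formula $(\cos\theta + i\sin\theta)^k = \cos k\theta + i \sin k\theta$, which itself follows by an elementary induction on $k$ using the angle-addition formulas for sine and cosine (these being completely independent of Lemma \ref{lemdemoiv}). Assuming De Moivre's identity, the two claimed formulas will drop out by expanding the left-hand side with the binomial theorem and then separating real and imaginary parts.

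More concretely, the first step is to apply the binomial theorem to obtain
\[
(\cos\theta + i\sin\theta)^k = \sum_{j=0}^{k} \binom{k}{j} \cos^{k-j}\theta \,(i\sin\theta)^{j} = \sum_{j=0}^{k} i^{j}\binom{k}{j} \cos^{k-j}\theta \sin^{j}\theta.
\]
Next, I would split this sum according to the parity of $j$. For even $j$, write $j = 2m$ so that $i^{j} = (i^{2})^{m} = (-1)^{m} = (-1)^{j/2}$, which is real; for odd $j$, write $j = 2m+1$ so that $i^{j} = i\cdot(-1)^{m} = i\,(-1)^{(j-1)/2}$, which is $i$ times a real number. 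Thus
\[
(\cos\theta + i\sin\theta)^k = \sum_{\substack{j \text{ even} \\ 0 \le j \le k}} (-1)^{j/2}\binom{k}{j}\cos^{k-j}\theta \sin^{j}\theta + i \sum_{\substack{j \text{ odd} \\ 0 \le j \le k}} (-1)^{(j-1)/2}\binom{k}{j}\cos^{k-j}\theta \sin^{j}\theta.
\]
Equating the real part of this expression with $\cos k\theta$ and the imaginary part with $\sin k\theta$ gives both identities simultaneously.

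There is essentially no hard step here: the only real care needed is in tracking the powers of $i$ correctly to produce the signs $(-1)^{j/2}$ and $(-1)^{(j-1)/2}$, and in noting that the even-$j$ terms contribute to $\cos k\theta$ while the odd-$j$ terms contribute to $\sin k\theta$. As an alternative that avoids complex numbers entirely, one could argue by induction on $k$, using $\cos(k+1)\theta = \cos k\theta\cos\theta - \sin k\theta\sin\theta$ and $\sin(k+1)\theta = \sin k\theta\cos\theta + \cos k\theta\sin\theta$ together with Pascal's identity $\binom{k}{j-1}+\binom{k}{j}=\binom{k+1}{j}$; however, the bookkeeping of indices and the parity splitting make this noticeably more tedious than the De Moivre approach, so I would opt for the complex-exponential route.
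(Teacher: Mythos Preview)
Your proof is correct and follows essentially the same approach as the paper: apply De Moivre's formula, expand $(\cos\theta+i\sin\theta)^k$ via the binomial theorem, split according to the parity of $j$ using $i^j=(-1)^{j/2}$ for even $j$ and $i^j=i(-1)^{(j-1)/2}$ for odd $j$, and then equate real and imaginary parts.
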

\begin{proof}
By de Moivre's formula, 
\begin{align*}
\cos k\theta+i\sin k\theta&=(\cos\theta+i\sin\theta)^k \\
&=\sum_{j=0}^k \binom{k}{j}\cos^{k-j}\theta(i\sin\theta)^j \\
&=\sum_{j \; \text{even}  \atop 0 \leq j \leq k}^{}(-1)^\frac{j}{2}\binom{k}{j}\cos^{k-j}\theta\sin^{j}\theta+i\sum_{j \; \text{odd}  \atop 0 \leq j \leq k}^{}(-1)^\frac{j-1}{2}\binom{k}{j}\cos^{k-j}\theta\sin^{j}\theta,
\end{align*}
where $i$ is the imaginary unit. Since $\sin\theta$, $\cos\theta$, $\sin k\theta$ and $\cos k\theta$ are real numbers,
\begin{align*}
\cos k\theta=\sum_{j \; \text{even}  \atop 0 \leq j \leq k}^{}(-1)^\frac{j}{2}\binom{k}{j}\cos^{k-j}\theta\sin^{j}\theta, \qquad \sin k\theta=\sum_{j \; \text{odd}  \atop 0 \leq j \leq k}^{}(-1)^\frac{j-1}{2}\binom{k}{j}\cos^{k-j}\theta\sin^{j}\theta.
\end{align*}
\end{proof}

\noindent
Consequently, by \eqref{coskthdouble}, \eqref{sinkthdouble} and Lemma \ref{lemdemoiv}, for each $k, n \in \mathbb{N}$ and $(x,y) \in \mathbb{R}^2 \setminus S$ we obtain that
\begin{align}
\cos k\Theta(x,y)&=\sum_{j \; \text{even}  \atop 0 \leq j \leq k}^{}(-1)^\frac{j}{2}\binom{k}{j}\left(\frac{x}{\sqrt{x^2+y^2}}\right)^{k-j}\left(\frac{y}{\sqrt{x^2+y^2}}\right)^{j}  \nonumber \\
&=\frac{1}{\sqrt{x^2+y^2}^k}\sum_{j \; \text{even}  \atop 0 \leq j \leq k}^{}(-1)^\frac{j}{2}\binom{k}{j}x^{k-j}y^{j}, \label{cosexpre}
\end{align}
\begin{align}
\sin k\Theta(x,y)&=\sum_{j \; \text{odd}  \atop 0 \leq j \leq k}^{}(-1)^\frac{j-1}{2}\binom{k}{j}\left(\frac{x}{\sqrt{x^2+y^2}}\right)^{k-j}\left(\frac{y}{\sqrt{x^2+y^2}}\right)^{j} \nonumber \\
&=\frac{1}{\sqrt{x^2+y^2}^k}\sum_{j \; \text{odd}  \atop 0 \leq j \leq k}^{}(-1)^\frac{j-1}{2}\binom{k}{j}x^{k-j}y^{j}. \label{sinexpre}
\end{align}
Moreover, by \eqref{undefnser}, \eqref{cosexpre} and \eqref{sinexpre},
\begin{align}
u_n(x,y)&=\frac{c_0}{2}+\sum_{k=1}^{n} \sqrt{(x^2+y^2)}^k(c_k\cos k\Theta(x,y)+d_k\sin k\Theta(x,y)) \nonumber \\
&=\frac{c_0}{2}+\sum_{k=1}^{n}\left(c_k\sum_{j \; \text{even}  \atop 0 \leq j \leq k}^{}(-1)^\frac{j}{2}\binom{k}{j}x^{k-j}y^{j}+d_k\sum_{j \; \text{odd}  \atop 0 \leq j \leq k}^{}(-1)^\frac{j-1}{2}\binom{k}{j}x^{k-j}y^{j}\right), \label{equationnumb}
\end{align}
where $c_k$ and $d_k$ are defined as in \eqref{ckdkdefn}.
For each $k \in \mathbb{N}$, we define functions $Sin_k:\mathbb{R}^2 \setminus \{\mathbf{0}\} \rightarrow \mathbb{R}$ and $Cos_k:\mathbb{R}^2 \setminus \{\mathbf{0}\} \rightarrow \mathbb{R}$ given by
\begin{align}
Sin_k(x,y)&:=\frac{1}{\sqrt{x^2+y^2}^k}\sum_{j \; \text{odd}  \atop 0 \leq j \leq k}^{}(-1)^\frac{j-1}{2}\binom{k}{j}x^{k-j}y^{j}, \label{expsincos0} \\
Cos_k(x,y)&:=\frac{1}{\sqrt{x^2+y^2}^k}\sum_{j \; \text{even}  \atop 0 \leq j \leq k}^{}(-1)^\frac{j}{2}\binom{k}{j}x^{k-j}y^{j}, \quad (x,y) \in \mathbb{R}^2 \setminus \{\bold{0} \}. \label{expsincos}
\end{align}
Then, by \eqref{cosexpre}, \eqref{sinexpre}, \eqref{expsincos0}, \eqref{expsincos}, $Sin_k$ and $Cos_k$ are continuous extensions of $\sin k \Theta$ and $\cos k \Theta$ from $\mathbb{R}^2 \setminus S$ to 
 $\mathbb{R}^2 \setminus \{\bold{0}\}$, respectively.
 Next, for each $n \in\mathbb{N}$, we define the polynomial $\widetilde{u}_n: \mathbb{R}^2 \rightarrow \mathbb{R}$ given by
\begin{equation} \label{tildeun}
\widetilde{u}_n(x,y):=\frac{c_0}{2}+\sum_{k=1}^{n}\left(c_k\sum_{j \; \text{even}  \atop 0 \leq j \leq k}^{}(-1)^\frac{j}{2}\binom{k}{j}x^{k-j}y^{j}+d_k\sum_{j \; \text{odd}  \atop 0 \leq j \leq k}^{}(-1)^\frac{j-1}{2}\binom{k}{j}x^{k-j}y^{j}\right), \quad (x,y) \in \mathbb{R}^2.
\end{equation}
Thus, \eqref{expsincos0} and \eqref{expsincos}, for each $n \in \mathbb{N}$\, $\widetilde{u}_n$ is a polynomial satisfying
\begin{align*}
\widetilde{u}_n(x,y)&=\frac{c_0}{2}+\sum_{k=1}^{n} \sqrt{x^2+y^2}^k\Big(c_k Cos_k(x,y)+d_k Sin_k(x,y)\Big)\;\; && \text{for all $(x,y) \in \mathbb{R}^2\setminus\{\mathbf{0}\}$}.
\end{align*}
Moreover, by \eqref{equationnumb} $\widetilde{u}_n$ is a polynomial extension of $u_n$, i.e.
\begin{equation} \label{unuequalit}
\widetilde{u}_n(x,y)=u_n(x,y), \quad \text{ for all $(x,y) \in \overline{B}_R(\mathbf{0})\setminus S$.}
\end{equation}
\centerline{}
\begin{theorem} \label{polyfinal}
Let $g \in C^\alpha(\partial B_R)$ for some $\alpha \in (0, 1]$ and $f$ be a function on $\mathbb{R}$ defined in \eqref{ffunc}.  (Then, $f \in C^{\alpha}([-\pi, \pi])$ with $[f]_{C^{\alpha}[-\pi, \pi]} \leq R^{\alpha} [g]_{C^{\alpha}(\partial B_R)}$ by Proposition \ref{holestim}.)
Then, the following hold:
\begin{itemize}
\item[(i)]
Let $\widetilde{u}_n$ be a polynomial defined in \eqref{tildeun}. For each $n \geq1$, it holds that $\Delta \widetilde{u}_n=0$ in $\mathbb{R}^2$. Moreover, 
$\widetilde{u}_n$ converges to $g$ uniformly on $\partial B_R$ as $n \rightarrow \infty$.
\item[(ii)]
Let $F$ be a continuous extension of $f \circ \Theta$ 
from $\overline{B}_R \setminus S$ to $\overline{B}_{R}(\bold{0}) \setminus \{\bold{0}\}$, where $f$ is the function defined in \eqref{ffunc}. For each $n \geq 1$, define
\begin{equation} \label{Phidefn}
\Phi_n(x,y):=\frac{c_0}{2}+\sum_{k=1}^{n} R^k\Big(c_k Cos_k(x,y)+d_k Sin_k(x,y)\Big),\;\;  \text{$(x,y) \in \overline{B}_R\setminus\{\mathbf{0}\}$},
\end{equation}
where $Cos_k$ and $Sin_k$ are defined as in \eqref{expsincos0} and \eqref{expsincos}, respectively, and $c_k$ and $d_k$ are defined as in \eqref{ckdkdefn}. Then, $\Phi_n$ converges to $F$ uniformly on $B_{R}(\bold{0}) \setminus \{\bold{0}\}$ as $n \rightarrow \infty$. In particular,
\begin{align} 
| \Phi_n (x,y)-F(x,y)| \leq (2\pi)^{\alpha} \gamma_0 [f]_{C^{\alpha}([-\pi, \pi])}\cdot  \left( \frac{1}{n} \right)^{\alpha}\ln n,   \label{imptconvra}
\end{align}
where $\gamma_0$ is a constant as in Lemma \ref{jackson}.
\end{itemize}
\end{theorem}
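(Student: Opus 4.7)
The plan is to transport the uniform convergence of $v_n(R,\cdot)$ to $f$ established in Proposition \ref{uniconth} back to $\partial B_R$ and to the punctured ball, by exploiting the identities \eqref{cosexpre}, \eqref{sinexpre}, \eqref{equationnumb}, \eqref{unuequalit}, and \eqref{harmoe} that link the Cartesian objects $\widetilde{u}_n$, $\Phi_n$ to the polar object $v_n$.

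For part (i), I first argue that $\Delta \widetilde{u}_n \equiv 0$ on $\mathbb{R}^2$. On the open set $\mathbb{R}^2 \setminus S$, the formulas \eqref{cosexpre}, \eqref{sinexpre}, \eqref{equationnumb}, and \eqref{tildeun} give $\widetilde{u}_n(x,y)=v_n(\mathbf{r}(x,y),\Theta(x,y))=u_n(x,y)$, so \eqref{harmonicun} yields $\Delta \widetilde{u}_n = 0$ there. Since $\widetilde{u}_n$ is a polynomial, $\Delta \widetilde{u}_n$ is a polynomial as well, and a polynomial that vanishes on a non-empty open subset of $\mathbb{R}^2$ must vanish identically. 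For the uniform convergence on $\partial B_R$, I parametrize $\partial B_R \setminus \{(-R,0)\}$ by $\theta \mapsto (R\cos\theta,R\sin\theta)$ with $\theta \in (-\pi,\pi)$. Then \eqref{unuequalit} and \eqref{harmoe} give $\widetilde{u}_n(R\cos\theta,R\sin\theta)=v_n(R,\theta)$, while by \eqref{ffunc} $g(R\cos\theta,R\sin\theta)=f(\theta)$. Proposition \ref{uniconth} thus furnishes a uniform bound on $|\widetilde{u}_n-g|$ over $\partial B_R \setminus \{(-R,0)\}$, and the remaining point $(-R,0)$ is covered by letting $\theta \to \pm \pi$ and invoking the continuity of $\widetilde{u}_n$ on $\mathbb{R}^2$ together with the $2\pi$-periodicity and continuity of $v_n(R,\cdot)$ and $f$.

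For part (ii), the identities \eqref{cosexpre}, \eqref{sinexpre} combined with \eqref{expsincos0}, \eqref{expsincos} show that $Cos_k = \cos k\Theta$ and $Sin_k = \sin k\Theta$ on $\mathbb{R}^2 \setminus S$, whence $\Phi_n(x,y)=v_n(R,\Theta(x,y))$ and $F(x,y)=f(\Theta(x,y))$ on $\overline{B}_R \setminus S$. Proposition \ref{uniconth} then gives, for every $(x,y) \in \overline{B}_R \setminus S$,
\[
|\Phi_n(x,y)-F(x,y)|=|v_n(R,\Theta(x,y))-f(\Theta(x,y))|\leq (2\pi)^{\alpha} \gamma_0 [f]_{C^\alpha([-\pi,\pi])} \left(\frac{1}{n}\right)^{\alpha} \ln n.
\]
Because $\Phi_n$ and $F$ are continuous on $\overline{B}_R \setminus \{\mathbf{0}\}$ (the former since $Cos_k$ and $Sin_k$ are continuous on $\mathbb{R}^2 \setminus \{\mathbf{0}\}$, the latter by hypothesis) and since $\overline{B}_R \setminus S$ is dense in $\overline{B}_R \setminus \{\mathbf{0}\}$, the estimate extends by continuity to the entire punctured closed ball, yielding in particular the claim on $B_R \setminus \{\mathbf{0}\}$.

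The principal subtlety will be the bookkeeping around the ray $S$: the Cartesian functions $Cos_k$, $Sin_k$, $\widetilde{u}_n$, and $\Phi_n$ have been defined precisely so as to be continuous across $S$ even though $\Theta$ and $u_n$ are not, so every assertion must first be reduced to $\mathbb{R}^2 \setminus S$ (or $\overline{B}_R \setminus S$) and then propagated by continuity. Once this is in place, no further computation is required beyond invoking Proposition \ref{uniconth}.
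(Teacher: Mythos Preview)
Your proposal is correct and follows essentially the same approach as the paper's proof: both reduce to $\mathbb{R}^2\setminus S$ (resp.\ $\overline{B}_R\setminus S$) via the identities \eqref{unuequalit}, \eqref{harmoe}, \eqref{cosexpre}, \eqref{sinexpre}, invoke Proposition~\ref{uniconth}, and then extend across $S$ by continuity. The only cosmetic difference is that you justify $\Delta\widetilde{u}_n\equiv 0$ on $\mathbb{R}^2$ via the identity theorem for polynomials, whereas the paper uses continuity of $\Delta\widetilde{u}_n$; both are equally valid.
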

\begin{proof}
(i) We have $\Delta \widetilde{u}_n = \Delta u_n =0$ in $\mathbb{R}^2 \setminus S$ by \eqref{harmonicun} and \eqref{unuequalit}. 
Since $\widetilde{u}_n$ is smooth in $\mathbb{R}^2$, we get $\Delta \widetilde{u}_n=0$ in $\mathbb{R}^2$.
Note that by \eqref{harmoe} and \eqref{unuequalit},
for each $\theta \in (-\pi, \pi)$
$$
\widetilde{u}_n(R \cos \theta, R \sin \theta)=u_n(R \cos \theta, R \sin \theta) =u_n(\mathbf{x}(R,\theta),\mathbf{y}(R,\theta)) = v_n (R, \theta),
$$
where $v_n$ is defined as in \eqref{fourierdefn}. Therefore, by the continuity of $\widetilde{u}_n$ and $v_n$
$$
\widetilde{u}_n(R \cos \theta, R \sin \theta)=v_n(R, \theta) \quad \text{ for all $\theta \in [-\pi, \pi]$}.
$$
By Proposition \ref{uniconth} and \eqref{ffunc}, $\widetilde{u}_n(R \cos(\cdot), R \sin(\cdot))$ converges to $g\big(R \cos(\cdot). R\sin (\cdot)\big)$
uniformly on $[-\pi, \pi]$ as $n \rightarrow \infty$, as desired. \\
(ii) Observe that for each $(x,y) \in \overline{B}_R \setminus S$, we get
$$
\Phi_n(x,y)=\frac{c_0}{2}+\sum_{k=1}^{n} R^k\Big(c_k \cos k\Theta(x,y)+d_k \sin k \Theta(x,y) \Big) =v_n \left(R, \Theta(x,y) \right),
$$
where $v_n$ is defined as in \eqref{fourierdefn}.
By Proposition \ref{uniconth}, we have
$$
|v_n(R, \theta) - f(\theta)| \leq (2\pi)^{\alpha} \gamma_0 [f]_{C^{\alpha}([-\pi, \pi])}  \left( \frac1n \right)^{\alpha} \ln n , \quad \text{ for all $\theta \in [-\pi, \pi]$}.
$$
Since the range of $\Theta$ on $\overline{B}_R \setminus S$ is $(-\pi, \pi)$,
$$
|\Phi_n(x,y)-f(\Theta(x,y))| \leq  (2\pi)^{\alpha} \gamma_0 [f]_{C^{\alpha}([-\pi, \pi])}  \cdot \left( \frac1n \right)^{\alpha} \ln n, \;\; \text{ for all $(x,y) \in \overline{B}_R \setminus S$}.
$$
Since $\Phi_n$ is continuous on $\overline{B}_R \setminus \{\bold{0}\}$ and $f \circ \Theta$ has a continuous extension $F$ on $\overline{B}_R \setminus \{\bold{0}\}$, \eqref{imptconvra} holds by Proposition \ref{holestim}, and hence the assertion follows.
\end{proof}

\centerline{}
\subsection{Uniform convergence of harmonic polynomials on $\overline{B}_R$}  \label{uniforconpol}
In this section, to derive our main result we use the idea of Niels Henrik Abel who developed the result named Abel's test by using the formula of summation by parts.
Although the formula below can be found in most elementary analysis books, the statement and its proof are left here for the reader's accessibility.

\begin{proposition}[Summation by parts] \label{sumabyparts}
Let $(a_n)_{n \geq 1}$ and $(b_n)_{n \geq 1}$ be sequences of real numbers, and let $B_n = \sum_{k=1}^n b_k$ for all $n \in \mathbb{N}$. Then for $n,m \in \mathbb{N}$ with $m \geq n$,
$$
\sum_{k=n}^m a_kb_k=a_{m+1}B_m-a_nB_{n-1}-\sum_{k=n}^m(a_{k+1}-a_k)B_k.
$$
\end{proposition}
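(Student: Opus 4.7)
The proof is an algebraic manipulation based on Abel's trick of treating the partial sum $B_k$ as a discrete antiderivative of $b_k$. Adopting the convention $B_0 := 0$, we have $b_k = B_k - B_{k-1}$ for every $k \geq 1$, so the left-hand side can be rewritten as
\[
\sum_{k=n}^{m} a_k b_k \;=\; \sum_{k=n}^{m} a_k B_k \;-\; \sum_{k=n}^{m} a_k B_{k-1}.
\]

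First I would reindex the second sum via $j = k-1$, turning it into $\sum_{k=n-1}^{m-1} a_{k+1} B_k$. Next, I would peel off the boundary term $a_m B_m$ from the first sum and $a_n B_{n-1}$ from the reindexed second sum, aligning the two remaining sums over the common range $n \leq k \leq m-1$; this yields
\[
\sum_{k=n}^{m} a_k b_k \;=\; a_m B_m \;-\; a_n B_{n-1} \;-\; \sum_{k=n}^{m-1}(a_{k+1}-a_k) B_k.
\]
To match the stated form of the proposition, I would add and subtract $(a_{m+1}-a_m) B_m$: this converts the boundary term $a_m B_m$ into $a_{m+1} B_m$ and simultaneously extends the range of the remaining sum to include $k = m$, producing exactly the right-hand side of the claimed identity.

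The only real obstacle is clerical, namely keeping index shifts and endpoint contributions straight, so I would present the proof as a short chain of equalities and make the convention $B_0 = 0$ explicit at the outset so that the factor $a_n B_{n-1}$ is unambiguous when $n = 1$. As a sanity check one could verify the edge case $n = m$, where the identity collapses to $a_n b_n = a_{n+1} B_n - a_n B_{n-1} - (a_{n+1}-a_n) B_n = a_n(B_n - B_{n-1}) = a_n b_n$, confirming that the boundary bookkeeping is correct. An alternative route via induction on $m - n$ is also available but is no shorter than the direct computation above.
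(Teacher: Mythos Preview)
Your proof is correct and follows essentially the same approach as the paper: both substitute $b_k = B_k - B_{k-1}$, reindex the resulting sum $\sum a_k B_{k-1}$, and collect the boundary terms. The only cosmetic difference is that the paper adjusts the reindexed sum directly to run from $n$ to $m$ (picking up $a_n B_{n-1} - a_{m+1} B_m$ as correction terms), whereas you first align the sums over $n \le k \le m-1$ and then add and subtract $(a_{m+1}-a_m)B_m$; the algebra is identical.
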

\begin{proof}
We substitute $b_k=B_k-B_{k-1}$ into $\sum_{k=n}^ma_kb_k$. Note that
$$
\sum_{k=n}^ma_kb_k = \sum_{k=n}^m a_k(B_k-B_{k-1})=\sum_{k=n}^m a_kB_k -\sum_{k=n}^m a_kB_{k-1}.
$$
Rewriting the second term of right-hand side as
$$
\sum_{k=n}^m a_kB_{k-1} = \sum_{k=n-1}^{m-1} a_{k+1}B_{k} = \left(\sum_{k=n}^{m} a_{k+1}B_{k} \right) +a_nB_{n-1}-a_{m+1}B_m,
$$
we obtain that
$$
\sum_{k=n}^m a_kb_k=a_{m+1}B_m-a_nB_{n-1}-\sum_{k=n}^m(a_{k+1}-a_k)B_k.
$$
\end{proof}

\begin{theorem} \label{limitharmo}
Let $g \in C^\alpha(\partial B_R)$ for some $\alpha \in (0, 1]$ and $f$ be a function on $\mathbb{R}$ defined in \eqref{ffunc}  (Then, $f \in C^{\alpha}([-\pi, \pi])$ with $[f]_{C^{\alpha}[-\pi, \pi]} \leq R^{\alpha} [g]_{C^{\alpha}(\partial B_R)}$ by Proposition \ref{holestim}).
Let $\widetilde{u}_n$ be a polynomial defined in \eqref{tildeun}. Then, there exists $\widetilde{u} \in C(\overline{B}_R)$ such that
$\widetilde{u}_n$ converges to $\widetilde{u}$ uniformly on $\overline{B}_R$ as $n \rightarrow \infty$. Moreover, for each $n \geq e^{\frac{1}{\alpha}}$
\begin{align} 
&|\widetilde{u}(x,y)-\widetilde{u}_n(x,y)| \leq  2\gamma_0 (2\pi)^{\alpha}[f]_{C^\alpha([-\pi, \pi])} \left(\frac{\sqrt{x^2+y^2}}{R}\right)^{n+1} \left( \frac{1}{n} \right)^{\alpha} \ln n, \quad \forall \text{$(x,y) \in \overline{B}_R$},   \label{mainestimer}
\end{align}
where $\gamma_0$ is the constant in Lemma \ref{jackson}.
In particular, $\widetilde{u}(x,y) = g(x,y)$ for all $(x,y) \in \partial B_R$. 
\end{theorem}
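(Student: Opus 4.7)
The plan is to pass from the uniform convergence of $\Phi_n$ on $\overline{B}_R \setminus \{\mathbf{0}\}$ (Theorem \ref{polyfinal}(ii)) to uniform convergence of $\widetilde{u}_n$ on all of $\overline{B}_R$ by applying the summation-by-parts formula (Proposition \ref{sumabyparts}). For each $(x,y) \in \overline{B}_R \setminus \{\mathbf{0}\}$, I set $t := \sqrt{x^2+y^2}/R \in (0,1]$ and write $\widetilde{u}_n(x,y) = \tfrac{c_0}{2} + \sum_{k=1}^n t^k b_k$, where $b_k := R^k\bigl(c_k Cos_k(x,y) + d_k Sin_k(x,y)\bigr)$ and $B_m := \sum_{k=1}^m b_k = \Phi_m(x,y) - c_0/2$. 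This is the natural way to decouple the radial factor $t^k$, which depends on $(x,y)$ through its modulus only, from the angular part packaged into $\Phi_m$.

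Applying Proposition \ref{sumabyparts} with $a_k = t^k$ (starting at index $n+1$) gives, for $m > n \geq 1$,
\begin{equation*}
\widetilde{u}_m(x,y) - \widetilde{u}_n(x,y) = t^{m+1} B_m - t^{n+1} B_n - \sum_{k=n+1}^m (t^{k+1} - t^k) B_k.
\end{equation*}
Writing $B_k = (\Phi_k - F) + (F - c_0/2)$ and using the telescoping identity $\sum_{k=n+1}^m (t^{k+1} - t^k) = t^{m+1} - t^{n+1}$, the $(F - c_0/2)$ contributions cancel exactly, leaving
\begin{equation*}
\widetilde{u}_m - \widetilde{u}_n = t^{m+1}(\Phi_m - F) - t^{n+1}(\Phi_n - F) - \sum_{k=n+1}^m (t^{k+1} - t^k)(\Phi_k - F).
\end{equation*}

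Set $\epsilon_k := (2\pi)^\alpha \gamma_0 [f]_{C^\alpha([-\pi, \pi])} (\ln k)/k^\alpha$, so that Theorem \ref{polyfinal}(ii) gives $\|\Phi_k - F\|_\infty \leq \epsilon_k$. A direct computation shows $\frac{d}{dk}\bigl[(\ln k)/k^\alpha\bigr] = (1 - \alpha \ln k)/k^{\alpha+1}$, so $\epsilon_k$ is non-increasing whenever $k \geq e^{1/\alpha}$. For $t \in [0,1]$, $|t^{k+1}-t^k| = t^k(1-t)$ and $\sum_{k=n+1}^m t^k(1-t) = t^{n+1} - t^{m+1}$. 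Hence, for $n \geq e^{1/\alpha}$,
\begin{equation*}
|\widetilde{u}_m(x,y) - \widetilde{u}_n(x,y)| \leq \epsilon_n \bigl[t^{m+1} + t^{n+1} + (t^{n+1} - t^{m+1})\bigr] = 2\, t^{n+1} \epsilon_n.
\end{equation*}

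This shows $(\widetilde{u}_n)$ is uniformly Cauchy on $\overline{B}_R \setminus \{\mathbf{0}\}$; at the origin $\widetilde{u}_n(\mathbf{0}) = c_0/2$ is constant in $n$, so the Cauchy property extends to all of $\overline{B}_R$. The uniform limit $\widetilde{u}$ then lies in $C(\overline{B}_R)$, and letting $m \to \infty$ in the inequality above yields the error bound \eqref{mainestimer}. Finally, since Theorem \ref{polyfinal}(i) already gives $\widetilde{u}_n \to g$ uniformly on $\partial B_R$, uniqueness of uniform limits forces $\widetilde{u} = g$ on $\partial B_R$. The principal obstacle is arranging the Abel decomposition so that the unknown limit $F$ telescopes out cleanly, leaving only the controllable remainders $\Phi_k - F$; once that is done, the hypothesis $n \geq e^{1/\alpha}$ is dictated precisely by the monotonicity regime of $k \mapsto (\ln k)/k^\alpha$, and the resulting bound inherits the decisive $t^{n+1} = (r/R)^{n+1}$ factor that sharpens the convergence rate near the center of the disk.
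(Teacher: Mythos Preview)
Your proof is correct and follows essentially the same approach as the paper: both apply Abel summation by parts (Proposition \ref{sumabyparts}) with $a_k = t^k = (\sqrt{x^2+y^2}/R)^k$ and $B_k = \Phi_k - c_0/2$, subtract the telescoping identity in $F - c_0/2$ to isolate the remainders $\Phi_k - F$, and then exploit the monotonicity of $k \mapsto (\ln k)/k^{\alpha}$ on $[e^{1/\alpha},\infty)$ together with the telescoping sum $\sum_{k=n+1}^m t^k(1-t) = t^{n+1}-t^{m+1}$. The only cosmetic difference is that you bound the $t^{m+1}|\Phi_m - F|$ term by $\epsilon_n$ directly (using $\epsilon_m \leq \epsilon_n$), whereas the paper keeps it as $\epsilon_m$ and lets it vanish as $m\to\infty$; both yield the same final estimate \eqref{mainestimer}.
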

\begin{proof}
Define $S_n: \overline{B}_R \setminus \{ \bold{0}\} \rightarrow \mathbb{R}$ given by 
\begin{equation*}
\quad S_n(x,y) := \sum_{k=1}^n \left(\sqrt{x^2+y^2}\right)^k \left(c_k Cos_k(x,y)+d_k Sin_k(x,y)\right), \quad (x,y) \in \overline{B}_R(\mathbf{0})\setminus\{\mathbf{0}\},
\end{equation*}
where $Cos_k$ and $Sin_k$ are defined as in \eqref{expsincos0}  and \eqref{expsincos}, respectively and $c_k$ and $d_k$ are defined as in \eqref{ckdkdefn}. For each $n \geq 1$, define the function $\tau_n$ on $\overline{B}_R$ given by
$$
\tau_n(x,y) := \left(\frac{\sqrt{x^2+y^2}}{R}\right)^n, \quad (x,y) \in \overline{B}_R.
$$
Then, for each $(x,y) \in \overline{B}_R(\mathbf{0})\setminus\{\mathbf{0}\}$
$$
S_n(x,y)= \sum_{k=1}^n \tau_k(x,y) \cdot R^k\left(c_k Cos_k(x,y)+d_k Sin_k(x,y)\right). 
$$
Let $\widehat{\Phi}_n := \Phi_n - c_0/2$ and $\widehat{F} := F - c_0/2$, where $\Phi_n$ is the function defined in \eqref{Phidefn} on $\overline{B}_R \setminus \{\mathbf{0}\}$ and $F$ is the function given in Theorem \ref{polyfinal}(ii) on $B_R \setminus \{\mathbf{0}\}$.
Now let $m \geq n+1$. Using the summation by parts in Proposition \ref{sumabyparts}, we obtain that  for each $(x,y) \in \overline{B}_R\setminus\{\mathbf{0}\}$,
\begin{align} 
&S_m(x,y)-S_n(x,y) =\sum_{k=n+1}^m \tau_k(x,y) \cdot R^k\left(c_k Cos_k(x,y)+d_k Sin_k(x,y)\right) \nonumber \\
& = \ \tau_{m+1}(x,y) \widehat{\Phi}_m(x,y)-\tau_{n+1}(x,y) \widehat{\Phi}_n(x,y)-\sum_{k=n+1}^m(\tau_{k+1}(x,y)-\tau_k(x,y)) \widehat{\Phi}_k(x,y) \nonumber \\
&  =  \ \tau_{m+1}(x,y)\widehat{\Phi}_m(x,y)-\tau_{n+1}(x,y)\widehat{\Phi}_n(x,y)-\sum_{k=n+1}^m(\tau_{k+1}(x,y)-\tau_k(x,y))\widehat{\Phi}_k(x,y) \nonumber \\
&\qquad \; -\tau_{m+1}(x,y)\widehat{F}(x,y)+\tau_{n+1}(x,y)\widehat{F}(x,y)+\sum_{k=n+1}^m(\tau_{k+1}(x,y)-\tau_k(x,y))\widehat{F}(x,y) \nonumber \\
& =-\tau_{m+1}(x,y)(\widehat{F}(x,y)-\widehat{\Phi}_m(x,y))+\tau_{n+1}(x,y)(\widehat{F}(x,y)-\widehat{\Phi}_n(x,y)) \nonumber \\
&\qquad \qquad  + \sum_{k=n+1}^m(\tau_{k+1}(x,y)-\tau_k(x,y))(\widehat{F}(x,y)-\widehat{\Phi}_k(x,y)). \label{sumbyparts}
\end{align}
By Theorem \ref{polyfinal}(ii),  for all  $k \geq 1$ and  $(x,y) \in \overline{B}_R(\mathbf{0}) \setminus \{ \bold{0}\}$
$$
|\widehat{F}(x,y)-\widehat{\Phi}_k(x,y)| \leq \gamma_0 (2\pi)^{\alpha}[f]_{C^\alpha([-\pi, \pi])} \ln k \, \left( \frac{1}{k} \right)^{\alpha}.
$$
Meanwhile, observe that the function $x \in (0, \infty) \mapsto \left( \ln x \right) \left( \frac{1}{x} \right)^{\alpha}$ is decreasing on $[e^{\frac{1}{\alpha}}, \infty )$, and hence
$$
\max_{n+1 \leq k} \left(\ln k \right) \left( \frac{1}{k} \right)^{\alpha} \leq  \left(\ln n \right) \left( \frac{1}{n} \right)^{\alpha}, \quad \text{ for all $n \geq e^{\frac{1}{\alpha}}$}.
$$
Thus,  it follows from \eqref{sumbyparts} that for each $(x,y) \in \overline{B}_R \setminus \{ \bold{0} \}$ and $n \geq e^{\frac{1}{\alpha}}$
\begin{align}
&|\widetilde{u}_m(x,y)-\widetilde{u}_n(x,y)|= \ |S_m(x,y)-S_n(x,y)| \nonumber \\
&\qquad \qquad  \leq \ \tau_{m+1}(x,y)|F(x,y)-\Phi_m(x,y)|+\tau_{n+1}(x,y)|F(x,y)-\Phi_n(x,y)| \nonumber \\
&\qquad \qquad \qquad  +\sum_{k=n+1}^m(\tau_{k}(x,y)-\tau_{k+1}(x,y))|F(x,y)-\Phi_k(x,y)|  \nonumber \\
& \qquad \qquad \leq \gamma_0 (2\pi)^{\alpha} [f]_{C^\alpha([-\pi, \pi])}\bigg( \tau_{m+1}(x,y) (\ln m) \left(\frac{1}{m} \right)^{\alpha}  \nonumber \\
& \qquad \qquad \qquad +\tau_{n+1}(x,y) (\ln n) \left(\frac{1}{n} \right)^{\alpha} +\Big(\tau_{n+1}(x,y) - \tau_{m+1}(x,y) \Big) \max_{n+1 \leq k \leq m} (\ln k)  \left(\frac{1}{k} \right)^{\alpha}\bigg)  \nonumber  \\
& \leq \gamma_0 (2\pi)^{\alpha} [f]_{C^\alpha([-\pi, \pi])}\bigg( \tau_{m+1}(x,y) (\ln m) \left(\frac{1}{m} \right)^{\alpha}   +\tau_{n+1}(x,y) (\ln n) \left(\frac{1}{n} \right)^{\alpha} +(\tau_{n+1}(x,y)\,  (\ln n)  \left(\frac{1}{n} \right)^{\alpha}\bigg). \label{estimpartial}
\end{align}
\eqref{estimpartial} also holds for every $(x,y) \in \overline{B}_R$ since we see that $\widetilde{u}_m(0,0)-\widetilde{u}_n(0,0)=0-0=0$ for any $m,n \in \mathbb{N}$. Thus, $(\widetilde{u}_n)_{n \geq 1}$ is a Cauchy sequence in $C(\overline{B}_R)$ and hence there exists $\widetilde{u} \in C(\overline{B}_R)$ such that $\lim_{n \rightarrow \infty} \widetilde{u}_n = \widetilde{u}$ in $C(\overline{B}_R)$. Therefore, letting $m \rightarrow \infty$ in \eqref{estimpartial}, we get \eqref{mainestimer}.
The last assertion follows from Theorem \ref{polyfinal}(i).
\end{proof}
\centerline{}

\subsection{Constructing a solution in $C^{\infty} (B_R)\cap C^\alpha(\overline{B}_R)$} \label{refsec}
In this section, we will finally show that $\widetilde{u} \in C(\overline{B}_R)$ in Theorem \ref{limitharmo} satisfies  $\widetilde{u} \in C^{\infty}(B_R)$ and solves \eqref{maineq}. Indeed, one can immediately get the desired result using the result, Weyl's lemma  (\cite[Corollary 2.2.1]{jo13}). Precisely, we observe by Theorem \ref{polyfinal}(i) that for each $n \geq 1$ $\Delta \widetilde{u}_n =0$ in $B_R$, where $\widetilde{u}_n$ is defined in \eqref{tildeun}. Then, using integration by parts 
$$
0=\int_{B_R} \Delta \widetilde{u}_n \cdot \varphi\, dx = \int_{B_R} \widetilde{u}_n \cdot \Delta \varphi\, dx, \quad \text{ for all $\varphi \in C_0^{\infty}(B_R)$},
$$
Now Theorem \ref{limitharmo} yields that
$$
\int_{B_R} \widetilde{u} \cdot \Delta \varphi dx = \lim_{n \rightarrow \infty} \int_{B_R} \widetilde{u}_n \cdot \Delta \varphi dx = 0 \quad \text{ for all $\varphi \in C_0^{\infty}(B_R)$}.
$$
Thus, using Weyl's lemma, we ultimately obtain that $\widetilde{u} \in C^{\infty}(B_R)$ and $\Delta \widetilde{u}=0$ on $B_R$. (We specifically refer to \cite{BKR01, BKR09, BKRS15} for the results of generalizing Weyl's lemma to more general elliptic and parabolic operators.)
As mentioned in the introduction, we will achieve the same result above without using the multi-variable Riemann (or Lebesgue) integral or any advanced results requiring significant integral calculus, such as Weyl's lemma and Gauss's theorem. Instead, we will rely only on very elementary results for classical derivatives and uniform convergence to show $\widetilde{u} \in C^{\infty}(B_R)$ and $\Delta \widetilde{u}=0$ on $B_R$. Moreover, using the harmonic polynomial approximation, we will show the non-trivial uniform estimates for the derivative of our solution with arbitrary order in terms of the $L^1$-boundary data.
\centerline{}
\begin{theorem} \label{construsol}
Let $g \in C^{\alpha}(\partial B_R)$ for some $\alpha \in (0,1]$ and $f$ be a function on $\mathbb{R}$ defined in \eqref{ffunc}  (Then, $f \in C^{\alpha}([-\pi, \pi])$ with $[f]_{C^{\alpha}[-\pi, \pi]} \leq R^{\alpha} [g]_{C^{\alpha}(\partial B_R)}$ by Proposition \ref{holestim}).
Let $\widetilde{u} \in C(\overline{B}_R)$ in Theorem \ref{limitharmo} and $(\widetilde{u}_n)_{n \geq 1}$ be a sequence of functions defined in \eqref{tildeun}. 
Then, for each $s \in \mathbb{N}$ and $r \in (0, R)$, it holds that 
$$
\lim_{n \rightarrow \infty} \widetilde{u}_n = \widetilde{u} \;\; \text{ in }\; C^{s}(\overline{B}_r).
$$
Moreover, $\widetilde{u} \in C(\overline{B}_R) \cap C^{\infty}(B_R)$ and $\widetilde{u}$ is a solution to \eqref{maineq}. In particular, for each $\alpha_1, \alpha_2 \in \mathbb{N} \cup \{0\}$ and $r \in (0, R)$
\begin{align} 
|D^{(\alpha_1, \alpha_2)} \widetilde{u} (x,y)| &\leq 
 D^{(\alpha_1, \alpha_2)} \left(\frac{-1}{2\pi}\int_{-\pi}^{\pi} f(\theta)\,d\theta \right) + \frac{R(\alpha_1+\alpha_2)!}{\pi (R-r)^{\alpha_1+\alpha_2+1}}  \int_{-\pi}^{\pi}|f(\theta)| d \theta, \nonumber \\
 &= 
 D^{(\alpha_1, \alpha_2)} \left(\frac{-1}{2\pi R}\int_{\partial B_R} |g|\,ds \right) + \frac{(\alpha_1+\alpha_2)!}{\pi (R-r)^{\alpha_1+\alpha_2+1}}  \int_{\partial B_R}|g| d s, \;\; \;\forall (x,y) \in \overline{B}_r.  \label{derivativestim}
\end{align}
\end{theorem}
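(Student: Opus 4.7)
The plan is to exploit the fact that each harmonic polynomial $\widetilde u_n$ is, up to the additive constant $c_0/2$, the real part of a complex polynomial. More specifically, using the binomial identity $(x+iy)^k=P_k(x,y)+iQ_k(x,y)$ (where $P_k$ and $Q_k$ are precisely the two inner sums appearing in \eqref{tildeun}), I can rewrite
$$\widetilde u_n(x,y)=\frac{c_0}{2}+\sum_{k=1}^n\operatorname{Re}\bigl(b_k(x+iy)^k\bigr),\qquad b_k:=c_k-id_k,$$
and the definitions \eqref{ckdkdefn} combine into $b_k=\frac{1}{\pi R^k}\int_{-\pi}^{\pi}f(\theta)e^{-ik\theta}\,d\theta$, which will yield the crucial single-coefficient bound $|b_k|\le\frac{1}{\pi R^k}\int_{-\pi}^{\pi}|f|\,d\theta=\frac{1}{\pi R^{k+1}}\int_{\partial B_R}|g|\,ds$, with no stray factor of $\sqrt 2$.

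Next I will differentiate term by term. Using $\partial_x(x+iy)=1$ and $\partial_y(x+iy)=i$, a short induction gives $D^{(\alpha_1,\alpha_2)}(x+iy)^k=\tfrac{k!}{(k-N)!}\,i^{\alpha_2}(x+iy)^{k-N}$ for $k\ge N:=\alpha_1+\alpha_2$ and zero otherwise. Taking absolute values, for every $(x,y)\in\overline B_r$ with $r<R$ I expect the pointwise estimate
$$\bigl|D^{(\alpha_1,\alpha_2)}\operatorname{Re}(b_k(x+iy)^k)\bigr|\le\frac{k!}{(k-N)!}\cdot\frac{r^{k-N}}{\pi R^{k+1}}\int_{\partial B_R}|g|\,ds.$$
Because $r/R<1$, the series of these pointwise bounds is of geometric-polynomial type and hence summable; by the Weierstrass M-test this will give uniform convergence on $\overline B_r$ of the termwise-differentiated series $\sum_k D^{(\alpha_1,\alpha_2)}\operatorname{Re}(b_k(x+iy)^k)$ for every multi-index $(\alpha_1,\alpha_2)$.

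From this uniform convergence of all partial derivatives, combined with the $C(\overline B_R)$-convergence $\widetilde u_n\to\widetilde u$ already provided by Theorem \ref{limitharmo}, the standard elementary result on termwise differentiation will imply that $(\widetilde u_n)$ is Cauchy in $C^s(\overline B_r)$ for every $s\in\mathbb N$ and every $r\in(0,R)$, with limit $\widetilde u$; in particular $\widetilde u\in C^{\infty}(B_R)$. Harmonicity of $\widetilde u$ then follows by passing to the uniform limit in $\Delta\widetilde u_n=0$ (Theorem \ref{polyfinal}(i)) within $C^2(\overline B_r)$, and since $\widetilde u|_{\partial B_R}=g$ by Theorem \ref{limitharmo}, $\widetilde u$ solves \eqref{maineq}.

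Finally, the uniform $L^1$-estimate \eqref{derivativestim} will follow by summing the pointwise bounds above. Writing $j=k-N$ and using the generating-function identity $\sum_{j\ge0}\binom{j+N}{N}\rho^j=(1-\rho)^{-N-1}$ with $\rho=r/R$, I get $\sum_{k\ge N}\tfrac{k!}{(k-N)!}\,r^{k-N}R^{-k}=\tfrac{N!\,R}{(R-r)^{N+1}}$, which yields the claimed bound immediately when $N\ge1$, since $D^{(\alpha_1,\alpha_2)}$ of the constant on the right-hand side of \eqref{derivativestim} then vanishes and the $c_0/2$ term drops out. The delicate point, and the main bookkeeping obstacle, is the $N=0$ case: here I must account separately for the leftover $|c_0/2|$-contribution and verify the algebraic identity $\tfrac{1}{2\pi R}+\tfrac{r}{\pi R(R-r)}=\tfrac{1}{\pi(R-r)}-\tfrac{1}{2\pi R}$, which exactly reproduces the expression in \eqref{derivativestim} once one interprets $D^{(0,0)}\bigl(\tfrac{-1}{2\pi R}\int_{\partial B_R}|g|\,ds\bigr)$ as the constant itself.
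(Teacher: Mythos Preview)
Your argument is correct and follows the same overall strategy as the paper: bound the derivatives of each homogeneous harmonic term by a geometric-polynomial quantity, invoke the Weierstrass $M$-test to get $C^s(\overline B_r)$-convergence, pass $\Delta\widetilde u_n=0$ to the limit, and sum the resulting series via $\sum_{j\ge0}\binom{j+N}{N}\rho^j=(1-\rho)^{-N-1}$. The one genuine difference is packaging: the paper works with the real polynomials $P_k,Q_k$ separately, computing $\partial_xP_k=kP_{k-1}$, $\partial_yP_k=-kQ_{k-1}$, etc.\ directly from the binomial expansions, and then uses the addition formula $\cos\theta\cos\beta+\sin\theta\sin\beta=\cos(\theta-\beta)$ to avoid a stray $\sqrt2$ when bounding $|c_kP_m+d_kQ_m|$. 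Your complex rewriting $\widetilde u_n=\tfrac{c_0}{2}+\sum_k\operatorname{Re}\bigl(b_k(x+iy)^k\bigr)$ with $b_k=c_k-id_k$ achieves both of these at once: the single formula $D^{(\alpha_1,\alpha_2)}(x+iy)^k=\tfrac{k!}{(k-N)!}i^{\alpha_2}(x+iy)^{k-N}$ replaces the four real derivative relations, and $|b_k|\le\tfrac{1}{\pi R^k}\int_{-\pi}^{\pi}|f|\,d\theta$ replaces the trigonometric identity. The resulting bounds and the final summation (including your $N=0$ bookkeeping identity) are numerically identical to the paper's.
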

\begin{proof}
 For each $k \in \mathbb{N}$, we define polynomials $P_k(x,y)$, $Q_k(x,y)$, and $h_k(x,y)$ given by
\begin{align*}
P_k(x,y) &:= \sum_{j \; \text{even}  \atop 0 \leq j \leq k}^{}(-1)^\frac{j}{2}\binom{k}{j}x^{k-j}y^{j}, \nonumber \\
Q_k(x,y) &:= \sum_{j \; \text{odd}  \atop 0 \leq j \leq k}^{}(-1)^\frac{j-1}{2}\binom{k}{j}x^{k-j}y^{j},  \nonumber \\
h_k(x,y) &:= c_k P_k(x,y) + d_k Q_k(x,y), \quad (x,y) \in \mathbb{R}^2, 
\end{align*}
where $c_k$ and $d_k$ are defined as in \eqref{ckdkdefn}. Moreover, we define $P_0:=1$ and $Q_0:=0$.
Then by the definition of $\widetilde{u}_n$ in \eqref{tildeun}, we get
$$
\widetilde{u}_n(x,y)-\frac{c_0}{2}=\sum_{k=1}^n h_k(x,y), \quad \text{ for all }\; (x,y) \in \mathbb{R}^2.
$$
Let us compute derivatives of $P_k$ and $Q_k$. For each $(x,y) \in \mathbb{R}^2$ we obtain that
\begin{align*}
\partial_x P_k(x,y)&=\partial_x\sum_{j \; \text{even}  \atop 0 \leq j \leq k}^{}(-1)^\frac{j}{2}\binom{k}{j}x^{k-j}y^{j}=\sum_{j \; \text{even}  \atop 0 \leq j \leq k-1}^{}(-1)^\frac{j}{2}\binom{k}{j}(k-j)x^{k-j-1}y^{j} \\
& =k\sum_{j \; \text{even}  \atop 0 \leq j \leq k-1}^{}(-1)^\frac{j}{2}\binom{k-1}{j}x^{k-j-1}y^{j}=kP_{k-1}(x,y), \\[7pt]
\partial_y P_k(x,y)&=\partial_y\sum_{j \; \text{even}  \atop 0 \leq j \leq k}^{}(-1)^\frac{j}{2}\binom{k}{j}x^{k-j}y^{j}=\sum_{j \; \text{even}  \atop 1 \leq j \leq k}^{}(-1)^\frac{j}{2}\binom{k}{j}jx^{k-j}y^{j-1} \\
&=k\sum_{j \; \text{even}  \atop 1 \leq j \leq k}^{}(-1)^\frac{j}{2}\binom{k-1}{j-1}x^{k-j}y^{j-1}=-k\sum_{j \; \text{odd}  \atop 0 \leq j \leq k-1}^{}(-1)^\frac{j-1}{2}\binom{k-1}{j}x^{k-j-1}y^{j}=-kQ_{k-1}(x,y),
\end{align*}
\begin{align*}
\partial_x Q_k(x,y)&=\partial_x\sum_{j \; \text{odd}  \atop 0 \leq j \leq k}^{}(-1)^\frac{j-1}{2}\binom{k}{j}x^{k-j}y^{j}=\sum_{j \; \text{odd}  \atop 0 \leq j \leq k-1}^{}(-1)^\frac{j-1}{2}\binom{k}{j}(k-j)x^{k-j-1}y^{j} \\
&=k\sum_{j \; \text{odd}  \atop 0 \leq j \leq k-1}^{}(-1)^\frac{j-1}{2}\binom{k-1}{j}x^{k-j-1}y^{j}=kQ_{k-1}(x,y),\\
\partial_y Q_k(x,y)&=\partial_y\sum_{j \; \text{odd}  \atop 0 \leq j \leq k}^{}(-1)^\frac{j-1}{2}\binom{k}{j}x^{k-j}y^{j}=\sum_{j \; \text{odd}  \atop 1 \leq j \leq k}^{}(-1)^\frac{j-1}{2}\binom{k}{j}jx^{k-j}y^{j-1} \\
&=k\sum_{j \; \text{odd}  \atop 1 \leq j \leq k}^{}(-1)^\frac{j-1}{2}\binom{k-1}{j-1}x^{k-j}y^{j-1}=k\sum_{j \; \text{even}  \atop 0 \leq j \leq k-1}^{}(-1)^\frac{j}{2}\binom{k-1}{j}x^{k-j-1}y^{j}=kP_{k-1}(x,y).
\end{align*}
Thus, for each $\alpha_1,\alpha_2 \in \mathbb{N} \cup \{0\}$,
\begin{align}
&D^{(\alpha_1,\alpha_2)}h_k(x,y) \nonumber \\
&=\begin{cases}
0 & \text{ if } k<\alpha_1+\alpha_2, \\
(-1)^{\frac{\alpha_2}{2}}\frac{k!}{(k-\alpha_1-\alpha_2)!}(c_k P_{k-\alpha_1-\alpha_2}(x,y)+d_k Q_{k-\alpha_1-\alpha_2}(x,y)) & \text{ if } \alpha_2 \equiv 0 \text{ (mod 2),} \\
(-1)^{\frac{\alpha_2-1}{2}} \frac{k!}{(k-\alpha_1-\alpha_2)!}(-c_kQ_{k-\alpha_1-\alpha_2}(x,y)+d_kP_{k-\alpha_1-\alpha_2}(x,y)) & \text{ if } \alpha_2  \equiv 1 \text{ (mod 2),}
\end{cases} \label{casedefn}
\end{align}
Let $m=k-\alpha_1-\alpha_2$ when $k \geq \alpha_1+\alpha_2$. Using the expressions of $c_k$ and $d_k$ expressed as in \eqref{ckdkdefn} and the expression in \eqref{expsincos}, we get
\begin{align}
|c_kP_{m}(x,y)+d_kQ_{m}(x,y)| &\leq |c_k||P_m(x,y)|+|d_k||Q_m(x,y)| \nonumber \\
&\leq \left|\frac{1}{R^k\pi}\int_{-\pi}^{\pi}f(\theta)\cos\theta d\theta\right| \left|(\sqrt{x^2+y^2})^{m} Cos_m (x,y)\right| \nonumber \\
&\qquad \;\;+\left|\frac{1}{R^k\pi}\int_{-\pi}^{\pi}f(\theta)\sin\theta d\theta\right| \left| (\sqrt{x^2+y^2})^m Sin_m(x,y)\right| \nonumber  \\
&\leq \frac{(\sqrt{x^2+y^2})^m}{R^k\pi}\int_{-\pi}^{\pi}|f(\theta)|d\theta, \quad \text{ for all }(x,y) \in \overline{B}_R(\mathbf{0}) \setminus \{\mathbf{0}\}, \label{ineqfintes}
\end{align}
where we used the identity, $\cos \theta \cos \beta +\sin\theta \sin \beta = \cos(\theta-\beta)$ for any $\beta \in \mathbb{R}$.
The inequality \eqref{ineqfintes} holds for all $(x,y) \in \overline{B}_R(\mathbf{0})$ since $P_m(\bold{0})=Q_m(\bold{0})=0$.
Likewise, we have
$$
|-c_kQ_{m}(x,y)+d_kP_{m}(x,y)| \leq  \frac{(\sqrt{x^2+y^2})^m}{R^k\pi}\int_{-\pi}^{\pi}|f(\theta)|d\theta, \quad \text{ for all }(x,y) \in \overline{B}_R(\mathbf{0}) \setminus \{\mathbf{0}\}. 
$$
Now fix $r \in (0,R)$. Then, for each $\alpha_1,\alpha_2 \in \mathbb{N} \cup \{0\}$ and $k \in \mathbb{N}$, define
\begin{align} \label{bound}
M_k^{(\alpha_1,\alpha_2)}:=\begin{cases}
0 & \text{ if } k<\alpha_1+\alpha_2, \\
\displaystyle \frac{k!}{(k-\alpha_1-\alpha_2)! \pi R^{\alpha_1+\alpha_2}} \left(\frac{r}{R} \right)^{k-\alpha_1-\alpha_2} \int_{-\pi}^{\pi} |f(\theta)| d\theta & \text{ otherwise}. 
\end{cases} 
\end{align}
Then, \eqref{casedefn} and \eqref{bound} implies that
\begin{equation*} 
|D^{(\alpha_1,\alpha_2)}h_k(x,y)| \leq M_k^{(\alpha_1,\alpha_2)}, \quad \text{ for all $(x,y) \in B_R$}.
\end{equation*}
Now let $\alpha_1,\alpha_2 \in \mathbb{N} \cup \{0\}$. 
Then,
\begin{align*}
\sum_{k=1}^{\infty} M_k^{(\alpha_1, \alpha_2)} = D^{(\alpha_1, \alpha_2)}\left( -\frac{1}{\pi} \int_{-\pi}^{\pi} |f(\theta)| d \theta \right)  +
\frac{(\alpha_1+\alpha_2)!}{\left( 1-\frac{r}{R} \right)^{\alpha_1+\alpha_2+1}}  \left(\frac{1}{\pi R^{\alpha_1+\alpha_2}} \int_{-\pi}^{\pi} |f(\theta)| d\theta \right).
\end{align*}
Thus, it follows from the Weierstrass $M$-test that $D^{(\alpha_1,\alpha_2)}\widetilde{u}_n$ uniformly converges to a continuous function in $\overline{B}_r$. Thus, we finially obtain that for each $s \in \mathbb{N}$, $\widetilde{u}_n$ converges to a function in $C^s(\overline{B}_r)$. 
Since  $r \in (0,R)$ is arbitrarily chosen, $\widetilde{u} \in C^{\infty}(B_R)$ by Theorem \ref{limitharmo}.
Since $\Delta \widetilde{u}_n=0$ on $B_R$ by Theorem \ref{polyfinal}(i), we get $\Delta \widetilde{u}=0$ on $B_R$. Moreover, $\widetilde{u}=g$ on $\partial B_R$ by Theorem \ref{limitharmo}. Finally, for each $(x,y) \in \overline{B}_r$ we obtain that
\begin{align*}
|D^{(\alpha_1, \alpha_2)} \widetilde{u}(x,y)| &= \lim_{n \rightarrow \infty} |D^{(\alpha_1,\alpha_2)}\widetilde{u}_n(x,y)| \leq  \left|D^{(\alpha_1, \alpha_2)  }\frac{c_0}{2}\right|+\lim_{n \rightarrow \infty} \sum_{k=1}^{n} |D^{(\alpha_1, \alpha_2)} h_k(x,y) | \\
& \leq   D^{(\alpha_1, \alpha_2)}\left( -\frac{1}{2\pi} \int_{-\pi}^{\pi} |f(\theta)| d \theta \right) + \frac{R(\alpha_1+\alpha_2)!}{\pi (R-r)^{\alpha_1+\alpha_2+1}}  \int_{-\pi}^{\pi}|f(\theta)| d \theta,
\end{align*}
as desired.
\end{proof}
\centerline{}
\noindent
Independent of the existence result for \eqref{maineq}, the uniqueness result for \eqref{maineq} can be proven in various ways. In particular, the maximum principle immediately leads to the uniqueness of the solution to \eqref{maineq}. The strong maximum principle is derived through the mean value property, which requires a basic understanding of integral calculus. On the other hand, the weak maximum principle can be derived directly using the second-order derivative test for single-variable functions, which immediately implies the uniqueness of solutions. For the sake of accessibility for readers, we present here the statement of the weak maximum principle.

\begin{theorem}[{\cite[Corollary 3.27]{AU23}}, the weak maximum principle] \label{weakmaxi}
Assume that $w \in C^2(B_R)\cap C(\overline{B}_R)$ is a solution to \eqref{maineq}. Then,
$$
\min_{y \in \partial B_R} g(y) \leq w(x) \leq \max_{y \in \partial B_R}g (y), \quad \text{ for all $x \in \overline{B}_R$}.
$$
In particular, the uniqueness of the solutions to \eqref{maineq} holds. 
\end{theorem}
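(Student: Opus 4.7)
The plan is to prove the upper bound first and then deduce the lower bound by applying the same argument to $-w$; uniqueness then follows from linearity. For the upper bound, I would employ the classical perturbation trick: set $M := \max_{y \in \partial B_R} g(y)$ and for each $\epsilon > 0$ define
$$w_{\epsilon}(x,y) := w(x,y) + \epsilon(x^2 + y^2), \quad (x,y) \in \overline{B}_R.$$
A direct computation gives $\Delta w_{\epsilon} = \Delta w + 4\epsilon = 4\epsilon > 0$ on $B_R$. The point of introducing this strictly subharmonic perturbation is that it cannot attain its maximum in the interior, whereas for $w$ itself one only gets $\Delta w = 0$, which is not incompatible with having an interior maximum.

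The core step is to rule out an interior maximum for $w_{\epsilon}$. Suppose for contradiction that $w_{\epsilon}$ attains its maximum on $\overline{B}_R$ at some interior point $(x_0, y_0) \in B_R$. Applying the second-derivative test for single-variable functions to $\phi_1(t) := w_{\epsilon}(t, y_0)$ at $t = x_0$ yields $\phi_1''(x_0) = \partial_x^2 w_{\epsilon}(x_0, y_0) \leq 0$, and likewise to $\phi_2(t) := w_{\epsilon}(x_0, t)$ yields $\partial_y^2 w_{\epsilon}(x_0, y_0) \leq 0$. Adding these two inequalities gives $\Delta w_{\epsilon}(x_0, y_0) \leq 0$, contradicting $\Delta w_{\epsilon} \equiv 4\epsilon > 0$ on $B_R$. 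Since $w_{\epsilon} \in C(\overline{B}_R)$ and $\overline{B}_R$ is compact, the maximum is attained, and by the above it must be attained on $\partial B_R$.

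Now on $\partial B_R$ we have $w_{\epsilon} = g + \epsilon R^2 \leq M + \epsilon R^2$, so for every $(x,y) \in \overline{B}_R$,
$$w(x,y) \leq w_{\epsilon}(x,y) \leq M + \epsilon R^2.$$
Letting $\epsilon \downarrow 0$ gives $w(x,y) \leq M$, as desired. Applying this inequality to the function $-w$, which solves \eqref{maineq} with boundary data $-g$, yields $-w(x,y) \leq \max_{\partial B_R}(-g) = -\min_{\partial B_R} g$, i.e., the lower bound. For uniqueness, if $w_1, w_2 \in C^2(B_R) \cap C(\overline{B}_R)$ both solve \eqref{maineq}, then $w_1 - w_2$ satisfies $\Delta(w_1 - w_2) = 0$ in $B_R$ with $w_1 - w_2 \equiv 0$ on $\partial B_R$, so the two-sided bound gives $w_1 \equiv w_2$ on $\overline{B}_R$.

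The argument is essentially routine; the only point requiring a little care is the contradiction step, where one must invoke only the one-variable second-derivative test (as the paper explicitly restricts itself to elementary tools) rather than the full multivariable Hessian criterion. The perturbation $\epsilon(x^2+y^2)$ is precisely what allows a strict inequality $\Delta w_\epsilon > 0$ so that the non-strict one-variable test suffices to produce the contradiction.
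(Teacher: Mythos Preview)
Your proof is correct and follows precisely the approach the paper points to: the paper does not actually give its own proof of this theorem but cites \cite[Corollary 3.27]{AU23} and remarks that ``the weak maximum principle can be derived directly using the second-order derivative test for single-variable functions,'' which is exactly the perturbation argument you carry out.
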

\centerline{}
\noindent
Now, we are ready to show Theorem \ref{mainthmintro}(i)
\begin{theorem} \label{mainthmi}
Let  $g \in C(\partial B_R)$. Then, there exists a unique solution $\widetilde{u} \in C(\overline{B}_R) \cap C^{\infty}(B_R)$ to \eqref{maineq}. Moreover, for each $r \in [0, R)$, $(x,y) \in \overline{B}_r$ and $\alpha_1, \alpha_2 \in \mathbb{N} \cup \{0\}$, \eqref{derivativestim} holds.
\end{theorem}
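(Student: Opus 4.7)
The strategy is to approximate the merely continuous boundary data $g$ by a sequence of H\"older continuous functions, apply Theorem \ref{construsol} to each approximation, and pass to the limit using the weak maximum principle (Theorem \ref{weakmaxi}) together with the derivative estimate \eqref{derivativestim}. Uniqueness is an immediate consequence of Theorem \ref{weakmaxi}: if two solutions existed, their difference would solve \eqref{maineq} with zero boundary data and hence vanish identically.

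For existence, I first choose a sequence $(g_n) \subset C^{\alpha}(\partial B_R)$ for some fixed $\alpha \in (0,1]$ converging uniformly to $g$ on $\partial B_R$. This is standard: for instance, convolve the $2\pi$-periodic extension of $\theta \mapsto g(R\cos\theta, R\sin\theta)$ with a smooth mollifier, or invoke Weierstrass' trigonometric approximation theorem. Theorem \ref{construsol} then produces, for each $n$, a function $\widetilde{u}^{(n)} \in C(\overline{B}_R) \cap C^{\infty}(B_R)$ solving \eqref{maineq} with boundary data $g_n$, and satisfying \eqref{derivativestim} with $(g,\widetilde{u})$ replaced by $(g_n,\widetilde{u}^{(n)})$. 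The difference $\widetilde{u}^{(n)} - \widetilde{u}^{(m)}$ is itself the solution to \eqref{maineq} with boundary data $g_n - g_m \in C^{\alpha}(\partial B_R)$, so Theorem \ref{weakmaxi} yields
\[
\|\widetilde{u}^{(n)} - \widetilde{u}^{(m)}\|_{C(\overline{B}_R)} \leq \|g_n - g_m\|_{C(\partial B_R)}.
\]
Thus $(\widetilde{u}^{(n)})$ is Cauchy in $C(\overline{B}_R)$, with a uniform limit $\widetilde{u} \in C(\overline{B}_R)$ satisfying $\widetilde{u}=g$ on $\partial B_R$.

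To upgrade $\widetilde{u}$ to $C^{\infty}(B_R)$, verify $\Delta \widetilde{u}=0$, and establish \eqref{derivativestim}, I apply the already-proven estimate \eqref{derivativestim} to the difference $\widetilde{u}^{(n)} - \widetilde{u}^{(m)}$ to get, for every $r \in [0,R)$ and $\alpha_1,\alpha_2 \in \mathbb{N} \cup \{0\}$,
\[
\|D^{(\alpha_1,\alpha_2)}(\widetilde{u}^{(n)} - \widetilde{u}^{(m)})\|_{C(\overline{B}_r)} \leq \frac{(\alpha_1+\alpha_2)!}{\pi(R-r)^{\alpha_1+\alpha_2+1}} \|g_n - g_m\|_{L^1(\partial B_R)},
\]
where the constant-derivative term drops out upon differencing. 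Since $g_n \to g$ uniformly implies convergence in $L^1(\partial B_R)$, the right-hand side vanishes as $n,m \to \infty$, so each derivative sequence is Cauchy in $C(\overline{B}_r)$. The classical theorem on termwise differentiation of uniformly convergent sequences then gives $\widetilde{u} \in C^{\infty}(B_R)$ with $D^{(\alpha_1,\alpha_2)}\widetilde{u} = \lim_n D^{(\alpha_1,\alpha_2)}\widetilde{u}^{(n)}$ on each $\overline{B}_r$, and in particular $\Delta \widetilde{u} = \lim_n \Delta \widetilde{u}^{(n)}=0$ in $B_R$. Finally, \eqref{derivativestim} for $\widetilde{u}$ itself is obtained by passing to the limit $n \to \infty$ in the inequality applied to $\widetilde{u}^{(n)}$ with data $g_n$, using the pointwise convergence of derivatives just established together with $\|g_n\|_{L^1(\partial B_R)} \to \|g\|_{L^1(\partial B_R)}$.

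The main obstacle is securing the uniform convergence of arbitrary-order derivatives on compact subsets of $B_R$, but this is precisely what the existing estimate \eqref{derivativestim} delivers when applied to the Cauchy difference sequence; the rest of the argument is straightforward bookkeeping built on the H\"older case already established in Theorem \ref{construsol}.
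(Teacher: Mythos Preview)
Your proposal is correct and follows essentially the same route as the paper's proof: approximate $g$ by smoother boundary data, apply Theorem \ref{construsol} to each approximant, use the weak maximum principle to get a uniform Cauchy sequence on $\overline{B}_R$, and use the estimate \eqref{derivativestim} applied to differences to get Cauchy sequences of all derivatives on each $\overline{B}_r$, then pass to the limit. The only cosmetic difference is that the paper takes the approximating sequence $(g_n)$ to be smooth on $\mathbb{R}^2$ rather than merely $C^{\alpha}(\partial B_R)$, and notes directly that the first term in \eqref{derivativestim} vanishes for $\alpha_1+\alpha_2\geq 1$ because it is the derivative of a constant (rather than ``upon differencing''); neither point affects the argument.
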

\noindent
\begin{proof}
Let $(g_n)_{n \geq 1}$ be a sequence of smooth functions on $\mathbb{R}^2$ such that $g_n$ converges to $g$ uniformly on $\partial B_R$. Let $\mathcal{T}g_n \in C(\overline{B}_R) \cap C^{\infty}(B_R)$ be a unique solution to \eqref{maineq} where $g$ is replaced by $g_n$,  constructed as in Theorem \ref{construsol}. Then, by the weak maximum principle (see Theorem \ref{weakmaxi}), 
$$
\| \mathcal{T} g_n \|_{C(\overline{B}_R)} \leq \|g_n \|_{C(\partial B_R)}.
$$
Using the completeness argument, there exists $\widetilde{u} \in C(\overline{B}_R)$ such that $\mathcal{T}g_n$ converges to $\widetilde{u}$ uniformly on $\overline{B}_R$, and hence $\widetilde{u}(x) =g(x)$ for all $x \in \partial B_R$. Now let $r \in (0, R)$ and $\alpha_1, \alpha_2 \in \mathbb{N} \cup \{0\}$ with $\alpha_1+\alpha_2 \geq 1$. Then, Theorem \ref{construsol} yields that
\begin{align*} \label{uniformat2}
\|D^{(\alpha_1, \alpha_2)} \mathcal{T}g_n- D^{(\alpha_1, \alpha_2)} \mathcal{T}g_m\|_{C(\overline{B}_r)} &\leq \frac{(\alpha_1+\alpha_2)!}{\pi (R-r)^{\alpha_1+\alpha_2+1}}  \int_{\partial B_R} |g_n-g_m| d s. 
\end{align*}
Using the completeness argument for $C^{\alpha_1+\alpha_2}(\overline{B}_r)$, we get $\widetilde{u} \in C^{\alpha_1+\alpha_2}(\overline{B}_r)$ such that
$\mathcal{T} g_n$ converges to $\widetilde{u}$ in $C^{\alpha_1+\alpha_2}(\overline{B}_r)$. Therefore, $\widetilde{u} \in C^{\infty}(B_R)$.
Moreover, $\Delta \widetilde{u} = 0$ on $B_R$ since $\Delta \mathcal{T}g_n = 0$ on $B_R$. Thus, $\widetilde{u}$ is a unique solution to \eqref{maineq} by Theorem \ref{weakmaxi}. Finally, since $\lim_{n \rightarrow \infty}g_n = g$ in $C(\partial B_R)$ and $\lim_{n \rightarrow \infty} D^{(\alpha_1, \alpha_2)} \mathcal{T}g_n = \widetilde{u}$ in $C(\overline{B}_r)$ and \eqref{derivativestim} holds where $g$ and $\widetilde{u}$ are replaced by $g_n$ and $\mathcal{T}g_n$, the last assertion follows. 
\end{proof}

\begin{corollary} \label{improconst}
\begin{itemize}
\item[(i)]
Let $u \in C(\overline{B}_L(\bold{x}_0)) \cap C^{\infty}(B_L(\bold{x}_0))$ satisfy $\Delta u= 0$ in $B_L(\bold{x}_0)$, where $\bold{x}_0 \in \mathbb{R}^2$ and $L>0$. Then, for any $r \in [0,L)$ 
\begin{equation} \label{uniestimtrans}
\|D^{(\alpha_1, \alpha_2)} u\|_{C(\overline{B}_r(\bold{x}_0))}  \leq   D^{(\alpha_1, \alpha_2)} \left(\frac{-1}{2\pi L}\int_{\partial B_L(\bold{x}_0)} u\,ds \right) + \frac{(\alpha_1+\alpha_2)!}{\pi (L-r)^{\alpha_1+\alpha_2+1}}  \int_{\partial B_L(\bold{x}_0)}|u| ds,
\end{equation}
where $\|D^{(\alpha_1, \alpha_2)} u\|_{C(\overline{B}_0(\bold{x}_0))}:=|D^{(\alpha_1, \alpha_2)} u(\bold{x}_0)|$.
In particular, if $\alpha_1+\alpha_2 \geq 1$, then
\begin{align} \label{intristim}
|D^{(\alpha_1, \alpha_2)} \widetilde{u}(\bold{x}_0)| &\leq \frac{(\alpha_1+\alpha_2)!}{\pi L^{\alpha_1+\alpha_2+1}}  \int_{\partial B_L(\bold{x}_0)}|u| d s \leq  \frac{2(\alpha_1+\alpha_2)!}{L^{\alpha_1+\alpha_2}} \| u\|_{C(\overline{B}_L(\bold{x}_0))}. 
\end{align}
\item[(ii)]
Let  $g \in C(\partial B_R)$ and $u$ be a unique solution to \eqref{maineq} as in Theorem \ref{mainthmi}. Then for any $\bold{x} \in B_R$ and $\alpha_1+\alpha_2 \geq 1$, we have
\begin{align}
|D^{(\alpha_1, \alpha_2)} \widetilde{u}(\bold{x})| &\leq \frac{(\alpha_1+\alpha_2)!}{\pi (R-\| \bold{x}\|)^{\alpha_1+\alpha_2+1}}  \int_{\partial B_{R-\|\bold{x}\|}(\bold{x})}|u| d s \leq  \frac{2(\alpha_1+\alpha_2)!}{(R-\|\bold{x}\|)^{\alpha_1+\alpha_2}} \| u\|_{C(\overline{B}_{R-\|\bold{x}\|}(\bold{x}))} \nonumber \\ 
&\leq  \frac{2(\alpha_1+\alpha_2)!}{(R-\|\bold{x}\|)^{\alpha_1+\alpha_2}} \| g\|_{C(\partial \overline{B}_R) }. \label{intristim2nd}
\end{align}
\end{itemize}
\end{corollary}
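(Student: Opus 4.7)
The plan is to deduce everything from the estimate \eqref{derivativestim} of Theorem \ref{mainthmi} by a simple translation argument, and then specialize.

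For part (i), I would first translate the ball $B_L(\bold{x}_0)$ to the origin. Define $v(x,y) := u(x+x_0, y+y_0)$ on $\overline{B}_L(\bold{0})$. Since translation preserves continuity, smoothness, and the Laplace operator, we have $v \in C(\overline{B}_L) \cap C^{\infty}(B_L)$ and $\Delta v = 0$ in $B_L$. The restriction $\tilde{g} := v|_{\partial B_L}$ lies in $C(\partial B_L)$, so by the uniqueness part of Theorem \ref{mainthmi} (applied with $R$ replaced by $L$ and $g$ by $\tilde{g}$), $v$ coincides with the unique solution constructed there. Applying \eqref{derivativestim} to $v$ and using the chain-rule identity $D^{(\alpha_1,\alpha_2)}v(x,y) = D^{(\alpha_1,\alpha_2)}u(x+x_0,y+y_0)$ together with the translation invariance $\int_{\partial B_L} |\tilde{g}|\,ds = \int_{\partial B_L(\bold{x}_0)} |u|\,ds$, I would recover \eqref{uniestimtrans} after relabeling $(x+x_0,y+y_0)$ as a generic point of $\overline{B}_r(\bold{x}_0)$.

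For the improved bound \eqref{intristim}, I would specialize $r = 0$ in \eqref{uniestimtrans} and use that $\overline{B}_0(\bold{x}_0) = \{\bold{x}_0\}$. When $\alpha_1+\alpha_2 \geq 1$ the differentiated constant on the right-hand side of \eqref{uniestimtrans} vanishes, leaving the first inequality of \eqref{intristim}. The second inequality then follows from the pointwise bound $|u| \leq \|u\|_{C(\overline{B}_L(\bold{x}_0))}$ on $\partial B_L(\bold{x}_0)$ combined with $\int_{\partial B_L(\bold{x}_0)}\,ds = 2\pi L$.

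Part (ii) is a direct consequence. For any $\bold{x} \in B_R$ one has $\overline{B}_{R-\|\bold{x}\|}(\bold{x}) \subset \overline{B}_R$, so $u$ is harmonic on $B_{R-\|\bold{x}\|}(\bold{x})$ and continuous on its closure. Applying \eqref{intristim} with $\bold{x}_0 = \bold{x}$ and $L = R - \|\bold{x}\|$ yields the first two inequalities of \eqref{intristim2nd}, while the weak maximum principle (Theorem \ref{weakmaxi}) upgrades $\|u\|_{C(\overline{B}_{R-\|\bold{x}\|}(\bold{x}))} \leq \|u\|_{C(\overline{B}_R)} \leq \|g\|_{C(\partial B_R)}$ to produce the final bound. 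The only bookkeeping point requiring care is the change-of-variables in the arclength integral and the fact that translation does not alter $D^{(\alpha_1,\alpha_2)}$; beyond this, there is no genuine obstacle, since all the analytic work has already been done in Theorem \ref{construsol} and Theorem \ref{mainthmi}.
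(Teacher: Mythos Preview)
Your proposal is correct and follows essentially the same route as the paper: the paper's proof also invokes \eqref{derivativestim} together with the translation from $\bold{x}_0$ to $\bold{0}$, then lets $r\to 0+$ (equivalently, sets $r=0$) for \eqref{intristim}, and for part (ii) applies \eqref{intristim} with $L=R-\|\bold{x}\|$ and the weak maximum principle. Your write-up simply spells out the translation and change-of-variables details that the paper leaves implicit.
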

\begin{proof}
(i) \eqref{uniestimtrans}  and \eqref{intristim} directly follow from \eqref{derivativestim} in Theorem \ref{construsol} with the translation from $\bold{x}_0$ to $\bold{0}$ and letting $r \rightarrow 0+$. \\
(ii) Note that $u \in C(\overline{B}_L(\bold{x}_0)) \cap C^{\infty}(B_L(\bold{x}_0))$ where $L:=R-\|\bold{x}\|$. Thus, the assertion follows from \eqref{intristim} and the weak maximum principle (see Theorem \ref{weakmaxi}).
\end{proof}

\begin{remark} \label{l1estimconsim}
Our results are quantitatively improved than \cite{HL11} and \cite{Kr96} in terms of constant values in \eqref{intristim}, \eqref{intristim2nd}, respectively.
For instance, if $\alpha_1+\alpha_2 \geq 2$, then the constants $\frac{2(\alpha_1+\alpha_2)!}{L^{\alpha_1+\alpha_2}}$
  in the right-hand side of \eqref{intristim} are smaller than the constants 
$\frac{2^{\alpha_1+\alpha_2} e^{\alpha_1+\alpha_2-1} (\alpha_1+\alpha_2)!}{L^{\alpha_1+\alpha_2}}$
 in \cite[Proposition 1.13]{HL11}. Also, if $\alpha_1+\alpha_2 \geq 2$, then the constants $\frac{2(\alpha_1+\alpha_2)!}{(R-\|\bold{x}\|)^{\alpha_1+\alpha_2}}$ in the right-hand side of \eqref{intristim2nd} is smaller than the constants $\frac{2^{\alpha_1+\alpha_2}(\alpha_1+\alpha_2)^{\alpha_1+\alpha_2}}{(R-\|\bold{x}\|)^{\alpha_1+\alpha_2}}$ in \cite[Theorem 2.5.2]{Kr96}.
\end{remark}

\centerline{}

\begin{theorem}  \label{holomoresul}
Assume $g \in C(\partial B_R)$ and let $\widetilde{u} \in C^{\infty}(B_R) \cap C(\overline{B}_R)$  be a (unique) solution in Theorem \ref{mainthmi}. Then, $\widetilde{u}$ is analytic on $B_R$. Precisely, for each $\bold{x}_0=(x_0,y_0) \in B_R$, $\kappa \in (0,\frac{1}{3})$, $\bold{h} =(h_1, h_2) \in \mathbb{R}^2$ with $\| \bold{h}\|\in [0, \kappa L)$ and $L:= R-\|\bold{x}_0\|$, \eqref{taylorexpress} and \eqref{analytiestim} hold.

\end{theorem}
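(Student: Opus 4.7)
My plan is to reduce the multivariate Taylor expansion to a one-variable Taylor expansion along the segment from $\bold{x}_0$ to $\bold{x}_0+\bold{h}$, and then use the Cauchy-type derivative estimate \eqref{uniestimtrans} from Corollary \ref{improconst} to control the remainder. First I would define $\phi(t):=\widetilde{u}(\bold{x}_0+t\bold{h})$ for $t\in[0,1]$. Since $\|\bold{x}_0+t\bold{h}\|\le\|\bold{x}_0\|+\kappa L<R$, the whole segment stays in $B_R$, where $\widetilde{u}\in C^{\infty}(B_R)$ by Theorem \ref{mainthmi}. Thus $\phi\in C^{\infty}([0,1])$, and the multivariate chain rule (applied inductively) gives
\[
\phi^{(k)}(t)=\sum_{\alpha_1+\alpha_2=k}\binom{k}{\alpha_1}D^{(\alpha_1,\alpha_2)}\widetilde{u}(\bold{x}_0+t\bold{h})\,h_1^{\alpha_1}h_2^{\alpha_2}.
\]
Applying the one-variable Taylor theorem with Lagrange remainder between $t=0$ and $t=1$ yields the partial sums appearing in \eqref{taylorexpress}, up to the error term $\phi^{(n)}(\xi)/n!$ for some $\xi\in(0,1)$.

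Next I would estimate this remainder. Note that $\bold{x}_0+\xi\bold{h}\in\overline{B}_{\kappa L}(\bold{x}_0)$ and that $\widetilde{u}$ is harmonic in $B_L(\bold{x}_0)\subset B_R$. Hence \eqref{uniestimtrans} applied to $\widetilde{u}$ on $B_L(\bold{x}_0)$ with $r=\kappa L$ gives, for $\alpha_1+\alpha_2=n\ge 1$,
\[
\bigl|D^{(\alpha_1,\alpha_2)}\widetilde{u}(\bold{x}_0+\xi\bold{h})\bigr|\le\frac{n!}{\pi\bigl(L-\kappa L\bigr)^{n+1}}\int_{\partial B_L(\bold{x}_0)}|\widetilde{u}|\,ds,
\]
because the constant $\frac{-1}{2\pi L}\int_{\partial B_L(\bold{x}_0)}\widetilde{u}\,ds$ has vanishing derivatives of positive order. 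Plugging this into the expansion of $\phi^{(n)}(\xi)/n!$ produces the factor $\sum_{\alpha_1+\alpha_2=n}\binom{n}{\alpha_1}|h_1|^{\alpha_1}|h_2|^{\alpha_2}=(|h_1|+|h_2|)^{n}$. Since $|h_1|+|h_2|\le 2\max(|h_1|,|h_2|)\le 2\|\bold{h}\|<2\kappa L$, the remainder is bounded by
\[
\frac{(2\kappa L)^{n}}{\pi L^{n+1}(1-\kappa)^{n+1}}\int_{\partial B_L(\bold{x}_0)}|\widetilde{u}|\,ds=\Bigl(\frac{2\kappa}{1-\kappa}\Bigr)^{n}\frac{1}{\pi(L-\kappa L)}\int_{\partial B_L(\bold{x}_0)}|\widetilde{u}|\,ds,
\]
which is exactly \eqref{analytiestim}. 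Finally, the restriction $\kappa<1/3$ is equivalent to $\frac{2\kappa}{1-\kappa}<1$, so letting $n\to\infty$ forces the remainder to zero and yields the convergent series \eqref{taylorexpress}, proving analyticity on $B_R$.

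The main obstacle is getting the constants to match exactly; the clean form $\bigl(\tfrac{2\kappa}{1-\kappa}\bigr)^{n}$ hinges on the coincidence between the factor $n!$ arising from the Cauchy estimate \eqref{uniestimtrans} and the factor $1/(\alpha_1!\alpha_2!)$ from Taylor's theorem, which telescope through the multinomial/binomial identity $\sum_{\alpha_1+\alpha_2=n}\tfrac{n!}{\alpha_1!\alpha_2!}|h_1|^{\alpha_1}|h_2|^{\alpha_2}=(|h_1|+|h_2|)^{n}$, together with the crude but sharp enough bound $|h_1|+|h_2|\le 2\|\bold{h}\|$. Everything else is routine bookkeeping.
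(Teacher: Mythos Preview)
Your proposal is correct and follows essentially the same approach as the paper: reduce to a one-variable Taylor expansion of $\phi(t)=\widetilde{u}(\bold{x}_0+t\bold{h})$, compute $\phi^{(k)}$ via the chain rule, and bound the Lagrange remainder using the Cauchy-type derivative estimate on $B_L(\bold{x}_0)$ with $r=\kappa L$. The only cosmetic differences are that the paper defines $\varphi$ on the slightly larger interval $(-1-\varepsilon,1+\varepsilon)$ and bounds $\sum_{\alpha_1+\alpha_2=n}\binom{n}{\alpha_1}|h_1|^{\alpha_1}|h_2|^{\alpha_2}$ by $2^n\|\bold{h}\|^n$ directly rather than via $(|h_1|+|h_2|)^n\le(2\|\bold{h}\|)^n$; the resulting constants are identical.
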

\begin{proof}
Let $\bold{x}_0 =(x_0, y_0) \in B_R$ and $\kappa \in (0,\frac{1}{3})$ be fixed and write $L:=R-\|\bold{x}_0\|>0$.  Let $\bold{h} =(h_1, h_2) \in \mathbb{R}^2$ with $\| \bold{h}\|\in [0, \kappa L)$ and set
$$
\varepsilon:= \frac{\kappa L-\|\bold{h}\|}{L} \in (0,\kappa].
$$
First, note that $B_{(1+\varepsilon) \| \bold{h}\|}(\bold{x}_0) \subset B_{\kappa L}( \bold{x}_0)$ since $\|(1+\varepsilon) \bold{h}\| < \|\bold{h}\| + \varepsilon L = \kappa L$.  Moreover, $\overline{B}_{\kappa L}(\bold{x}_0)  \subset B_{L}(\bold{x}_0) \subset B_R$. Define a function
$\varphi:(-1-\varepsilon,1+\varepsilon) \rightarrow \mathbb{R}$ given by
$$
\varphi(t):= \widetilde{u}(\bold{x}_0+t \bold{h}), \quad t \in (-1-\varepsilon,\,1+\varepsilon).
$$
Then, $\varphi \in C^{\infty}((-1-\varepsilon, 1+\varepsilon))$ since $\widetilde{u} \in C^{\infty}(B_R)$.
Now, we claim that
\begin{equation} \label{taylorone}
\varphi(t) = \sum_{k=0}^{\infty} \frac{\varphi^{(k)}(0)}{k!} t^k, \qquad \text{ for all $t \in (-1-\varepsilon,\,1+\varepsilon)$}.
\end{equation}
Indeed, for each $n \geq 1$, let 
\begin{equation} \label{taylorremain1}
R_n(t):=\varphi(t)-\sum_{k=0}^{n-1} \frac{\varphi^{(k)}(0)}{k!} t^k, \;\; \quad t \in (-1-\varepsilon, \,1+\varepsilon). 
\end{equation}
Now, fix $t \in (-1-\varepsilon,\, 1+\varepsilon)$.
Then, using Taylor's theorem, there exists $\zeta_t \in (0,1)$ such that
\begin{equation} \label{taylorremain2}
R_n(t)= \frac{\varphi^{(n)}(t \zeta_t)}{n!}.
\end{equation}
Meanwhile, the chain rule implies that for each $n \in \mathbb{N}$ and $s \in (-1-\varepsilon, 1+\varepsilon)$, we have
$$
\varphi^{(n)}(s) = (h_1 \partial_1 +h_2 \partial_2)^n \widetilde{u} (\bold{x}_0+s\bold{h}) = \sum_{\alpha_1+\alpha_2=n}  \binom{n}{\alpha_1} h_1^{\alpha_1} h_2^{\alpha_2} D^{(\alpha_1, \alpha_2)} \widetilde{u}(\bold{x}_0+s  \bold{h}).
$$
Observe that $\bold{x}_0+t \zeta_t \bold{h} \in B_{\kappa L}(\bold{x}_0)$ since $\| t \zeta_t \bold{h}\|<(1+\varepsilon) \|\bold{h}\|< \|\bold{h}\|+\varepsilon L = \kappa L$.
By \eqref{derivativestim} in Theorem \ref{construsol}, we get
\begin{align}
|R_n(t)|&=\frac{|\varphi^{(n)}(t\zeta_t) |}{n!} \leq \frac{2^n}{n!} \sqrt{h_1^2+h_2^2}^n \max_{\alpha_1+\alpha_2=n} \Big(D^{(\alpha_1, \alpha_2)} \widetilde{u}(\bold{x}_0+t \zeta_t \bold{h}) \Big)  \nonumber \\
& \leq \left(\frac{2 \| \bold{h}\|}{L-\kappa L}\right)^n \frac{1}{\pi( L-\kappa L)}  \int_{\partial B_{L} (\bold{x}_0)}|u| ds  \nonumber \\
&= \left(\frac{2 \kappa }{1-\kappa }\right)^n \frac{1}{\pi( L-\kappa L)}  \int_{\partial B_{L} (\bold{x}_0)}|\widetilde{u}| ds\longrightarrow 0, \quad \text{ as $n \rightarrow \infty$},	\label{lastestimer}
\end{align}
so that the claim is shown.
By substituting $t=1$ in \eqref{taylorone}, \eqref{taylorremain1}, \eqref{taylorremain2} and \eqref{lastestimer}, it follows that
$$
\widetilde{u}(\bold{x}_0+\bold{h})=\varphi(1)=\sum_{k=0}^{\infty} \frac{\varphi^{(k)}(0)}{k!} = \sum_{k=0}^{\infty}\left( \sum_{\alpha_1+\alpha_2=k}  \frac{D^{(\alpha_1, \alpha_2)} \widetilde{u}(\bold{x}_0) }{\alpha_1 ! \alpha_2!} h_1^{\alpha_1} h_2^{\alpha_2} \right),
$$
$$
\left| \widetilde{u}(\bold{x}_0 + \bold{h})-\sum_{k=0}^{n-1}\left( \sum_{\alpha_1+\alpha_2=k}  \frac{D^{(\alpha_1, \alpha_2)} \widetilde{u}(\bold{x}_0) }{\alpha_1 ! \alpha_2!} h_1^{\alpha_1} h_2^{\alpha_2} \right)  \right| \leq \left(\frac{2 \kappa }{1-\kappa }\right)^n \frac{1}{\pi( L-\kappa L)}  \int_{\partial B_{L} (\bold{x}_0)}|\widetilde{u}| ds,
$$
and hence, the assertion follows.
\end{proof}

\begin{remark} \label{improradiu}
The refined convergence region for Taylor's series, along with the error estimates within this region, at each point in the open disk in Theorem \ref{holomoresul}, is improved compared to that in \cite{E10} and \cite{HL11}.  Indeed, for each $\bold{x}_0 \in B_R$, the radius of convergence for both the Taylor series and the error estimates at $\bold{x}_0$ in Theorem \ref{holomoresul} is $\frac{1}{3} (R - \|\bold{x}_0\|)$.
On the other hand, the radius for the convergence of the Taylor series and for the error estimates at $\bold{x}_0$ in \cite[Section 2.2, Theorem 10]{E10} and \cite[Theorem 1.14]{HL11} is $\frac{R-\| \bold{x}_0\|}{512e}$ and $\frac{R-\|\bold{x}_0\|}{16 e}$, respectively.
\end{remark}

\centerline{}

\section{Acceleration of uniform convergence}  \label{lastsecacc}
So far, without using multi-variable integral calculus, we show not only the solvability of the main equation \eqref{maineq} with uniform estimates via $L^1$-boundary data but also the uniform convergence by harmonic polynomials for the unique solution to \eqref{maineq} based on the classical results on uniform convergence of Fourier's series. Near the boundary, the convergence rate depending on the H\"{o}lder continuity of $f$ is $O\left( \ln n  \left(\frac{1}{n}\right)^{\alpha} \right)$.  Using the more accelerated uniform convergence of the Fourier series for smooth functions and the arguments we have used in Theorem \ref{limitharmo}, we can achieve better convergence rates than $O\left( \ln n  \left(\frac{1}{n}\right)^{\alpha} \right)$ near the boundary.

\begin{lemma}[{\cite[page 22, Corollary IV]{jack94}}] \label{jackson2}
Let $k \in \mathbb{N}$ and $f \in C^k([-\pi,\pi])$ be periodic with $2\pi$-period. Then, for any $\theta \in [-\pi, \pi]$ and $n \in \mathbb{N}$, it holds that
$$
|f(\theta)-S_n(f)(\theta)| \leq \frac{\gamma_k}{n^k} \cdot \ln n  \cdot  \omega_{f^{(k)}}\left(\frac{2\pi}{n}\right).
$$
where $S_n$ is defined as in \eqref{fouridefn} and $\gamma_k$ is a constant depending on $k$ and
$\omega_{f^{(k)}}$ is defined as
$$
\omega_{f^{(k)}}(\delta) := \sup_{\substack{\theta_1,\theta_2 \in [-\pi,\pi] \\ |\theta_1-\theta_2|\leq \delta}}|f^{(k)}(\theta_1)-f^{(k)}(\theta_2)|, \quad \delta>0.
$$
\end{lemma}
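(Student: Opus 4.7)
The plan is to deduce this estimate by combining two classical building blocks: a Jackson-type inequality for the best approximation of smooth periodic functions by trigonometric polynomials, and the standard Lebesgue-constant bound for the partial sum operator $S_n$. This is exactly the same two-step structure that already underlies Lemma \ref{jackson} (the case $k=0$), and the task is to upgrade each piece to accommodate the extra smoothness.

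First, I would establish that for $f \in C^k([-\pi,\pi])$ periodic with $2\pi$-period, the best approximation error
$$E_n(f) := \inf_{T \in \mathcal{T}_n}\, \|f - T\|_{C([-\pi, \pi])},$$
where $\mathcal{T}_n$ denotes the space of real trigonometric polynomials of degree $\leq n$, satisfies
$$E_n(f) \leq \frac{\tilde{\gamma}_k}{n^k}\, \omega_{f^{(k)}}\!\left(\frac{2\pi}{n}\right)$$
for some constant $\tilde{\gamma}_k$ depending only on $k$. I would prove this inductively. The base case $k=0$ is the classical Jackson inequality $E_n(f) \leq \tilde{\gamma}_0\, \omega_f(2\pi/n)$, obtained by convolving $f$ against the Jackson kernel (a normalized square of the Fej\'er kernel of degree $\lfloor n/2 \rfloor$) and estimating the convolution error via a Minkowski-type inequality together with a second-moment bound on the kernel. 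For the inductive step, given $f \in C^{k+1}$ periodic, I would choose a best trigonometric approximation $T_{n-1}^{\ast}$ of degree $\leq n-1$ to $f^{(k+1)}$, then antidifferentiate $k+1$ times (fixing the constants of integration through the low-order Fourier coefficients of $f$, so as to preserve periodicity and degree) to obtain a polynomial $\widetilde{T}_n \in \mathcal{T}_n$ with $\|f - \widetilde{T}_n\|_{\infty} \leq (C/n)\, E_{n-1}(f^{(k+1)})$, which gives the inductive step.

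Second, I would invoke the Lebesgue-constant bound: the operator norm of $S_n : C([-\pi,\pi]) \to C([-\pi,\pi])$ is bounded by a constant multiple of $\ln n$ for $n$ large, a classical consequence of integrating the absolute value of the Dirichlet kernel. Since $S_n T = T$ for every $T \in \mathcal{T}_n$, for any such $T$ one has
$$\|f - S_n f\|_{\infty} \leq \|f - T\|_{\infty} + \|S_n(T - f)\|_{\infty} \leq (1 + C \ln n)\|f - T\|_{\infty},$$
and taking the infimum over $T \in \mathcal{T}_n$ yields $\|f - S_n f\|_{\infty} \leq (1 + C \ln n)\, E_n(f)$. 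Combining this with the first step and absorbing all numerical constants into a single $\gamma_k$ produces the claimed bound pointwise on $[-\pi, \pi]$.

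The main obstacle is the Jackson-type best-approximation inequality for smooth functions: the delicate choice of the approximating kernel and the management of the $n^{-k}$ gain, either directly via integration by parts against the Jackson kernel or via the inductive antidifferentiation argument sketched above, requires some careful bookkeeping to avoid losing the sharp $\omega_{f^{(k)}}(2\pi/n)$ factor. The passage from best approximation to partial sums through the Lebesgue-constant estimate, by contrast, is routine and is essentially the same argument already implicit in Lemma \ref{jackson}.
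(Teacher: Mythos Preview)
The paper does not supply a proof of this lemma; it is quoted verbatim as \cite[page 22, Corollary IV]{jack94} and used as a black box, exactly as Lemma~\ref{jackson} was. Your two-step outline --- a Jackson-type best-approximation bound $E_n(f)\le \tilde\gamma_k\,n^{-k}\,\omega_{f^{(k)}}(2\pi/n)$ combined with the Lebesgue-constant estimate $\|f-S_nf\|_\infty\le(1+C\ln n)\,E_n(f)$ --- is precisely the classical argument Jackson uses in the cited source, so your proposal is correct and aligned with the external reference even though the present paper offers no internal proof to compare against.

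One small remark on your inductive step: the passage from $E_{n}(f')$ to $E_n(f)$ via antidifferentiation is the right idea, but the bound you wrote, $\|f-\widetilde T_n\|_\infty\le (C/n)\,E_{n-1}(f')$, does not follow from naive integration (that would only give a bound proportional to the period length, not to $1/n$). The classical way to recover the factor $1/n$ is through a Favard-type inequality or, equivalently, by integrating against a suitable conjugate/Bernoulli kernel; you may want to flag this as the nontrivial ingredient when you flesh out the details.
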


\begin{corollary} \label{higfourcon}
Let $f \in C^{k,\alpha}([-\pi,\pi])$ for some $\alpha \in (0,1]$ and $f$ be periodic with $2\pi$-period. Then, 
$$
|f(\theta)-S_n(f)(\theta)| \leq  (2\pi)^{\alpha} \gamma_k [f^{(k)}]_{C^{\alpha}([-\pi, \pi])}\cdot  \left( \frac{1}{n} \right)^{k+\alpha} \ln n , \quad \text{$\forall \theta \in [-\pi, \pi]$},
$$
where $\gamma_k$ is a constant as in Lemma \ref{jackson2}. 
\end{corollary}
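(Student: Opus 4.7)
The plan is to mimic the proof of the earlier $k=0$ corollary (the one directly following Lemma \ref{jackson}), now using Lemma \ref{jackson2} in place of Lemma \ref{jackson}. The only real content is the conversion of the modulus of continuity $\omega_{f^{(k)}}$ into a concrete power of $1/n$ via the Hölder assumption on $f^{(k)}$.

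First, I would invoke Lemma \ref{jackson2}, which applies since $f \in C^{k,\alpha}([-\pi,\pi]) \subseteq C^{k}([-\pi,\pi])$ and $f$ is $2\pi$-periodic. This yields, for every $\theta \in [-\pi,\pi]$ and $n \in \mathbb{N}$,
$$
|f(\theta) - S_n(f)(\theta)| \;\leq\; \frac{\gamma_k}{n^k}\,\ln n \cdot \omega_{f^{(k)}}\!\left(\frac{2\pi}{n}\right).
$$
Next, I would use the definition of $C^{k,\alpha}([-\pi,\pi])$: by hypothesis $f^{(k)} \in C^{\alpha}([-\pi,\pi])$, so for all $\theta_1,\theta_2 \in [-\pi,\pi]$ we have $|f^{(k)}(\theta_1) - f^{(k)}(\theta_2)| \leq [f^{(k)}]_{C^{\alpha}([-\pi,\pi])}\,|\theta_1-\theta_2|^{\alpha}$. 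Taking the supremum over pairs with $|\theta_1-\theta_2|\leq \delta$ gives the pointwise bound $\omega_{f^{(k)}}(\delta) \leq [f^{(k)}]_{C^{\alpha}([-\pi,\pi])}\,\delta^{\alpha}$ for every $\delta > 0$.

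Setting $\delta = 2\pi/n$ then yields $\omega_{f^{(k)}}(2\pi/n) \leq (2\pi)^{\alpha}\,[f^{(k)}]_{C^{\alpha}([-\pi,\pi])}\,(1/n)^{\alpha}$. Plugging this into the Jackson estimate and collecting the powers of $1/n$ gives
$$
|f(\theta) - S_n(f)(\theta)| \;\leq\; (2\pi)^{\alpha}\,\gamma_k\,[f^{(k)}]_{C^{\alpha}([-\pi,\pi])}\,\left(\frac{1}{n}\right)^{k+\alpha}\ln n,
$$
which is the claimed bound. There is no real obstacle here: the argument is a one-line application of Lemma \ref{jackson2} combined with the trivial estimate $\omega_{f^{(k)}}(\delta) \leq [f^{(k)}]_{C^{\alpha}}\,\delta^{\alpha}$, exactly paralleling the earlier $k=0$ corollary. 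The only minor point to watch is that one must explicitly note $f^{(k)} \in C^{\alpha}([-\pi,\pi])$ (which is just the definition of $C^{k,\alpha}$) to legitimately control the modulus of continuity by the Hölder seminorm.
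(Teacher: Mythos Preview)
Your proposal is correct and follows exactly the approach the paper intends: the paper does not spell out a proof of this corollary, but it is clearly meant to be the verbatim analogue of the earlier $k=0$ corollary after Lemma~\ref{jackson}, now with Lemma~\ref{jackson2} in place of Lemma~\ref{jackson} and with $\omega_{f^{(k)}}(\delta)\le [f^{(k)}]_{C^{\alpha}}\,\delta^{\alpha}$ replacing $\omega_f(\delta)\le [f]_{C^{\alpha}}\,\delta^{\alpha}$.
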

We obtain the following result as a direct consequence of Corollary \ref{higfourcon} and the proof of Theorem \ref{limitharmo}.
\begin{theorem}
Let $g \in C^{\alpha}(\partial B_R)$ for some $\alpha \in (0,1]$ and $f$, $(\widetilde{u})_{n \geq 1}$ and $\widetilde{u}$ be defined in Theorem \ref{limitharmo}. Assume that $f \in C^{k, \alpha}([-\pi, \pi])$ for some $k \in \mathbb{N}$. Then,  for each $n \geq e$
\begin{align*} 
&|\widetilde{u}(x,y)-\widetilde{u}_n(x,y)| \leq  2\gamma_k (2\pi)^{\alpha}[f^{(k)}]_{C^\alpha([-\pi, \pi])} \left(\frac{\sqrt{x^2+y^2}}{R}\right)^{n+1} \left( \frac{1}{n} \right)^{k+\alpha} \ln n, \quad \forall \text{$(x,y) \in \overline{B}_R$},   
\end{align*}
where $\gamma_k$ is a constant as in Lemma \ref{jackson2}. 
\end{theorem}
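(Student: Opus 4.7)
The plan is to replicate the proof of Theorem \ref{limitharmo} verbatim, only substituting the sharper Fourier approximation estimate from Corollary \ref{higfourcon} in place of the one used for the merely H\"older-continuous $f$. In particular, I will use that the harmonic polynomial $\Phi_n$ introduced in Theorem \ref{polyfinal}(ii) satisfies, under the stronger assumption $f \in C^{k,\alpha}([-\pi,\pi])$,
$$
|\Phi_n(x,y) - F(x,y)| \leq (2\pi)^{\alpha}\gamma_k [f^{(k)}]_{C^{\alpha}([-\pi,\pi])}\left(\frac{1}{n}\right)^{k+\alpha}\ln n, \quad (x,y)\in \overline{B}_R\setminus\{\mathbf{0}\},
$$
since the proof of Theorem \ref{polyfinal}(ii) goes through unchanged once the Jackson-type bound is upgraded.

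Next, with $S_n$, $\tau_n$, $\widehat{\Phi}_n$ and $\widehat{F}$ defined exactly as in the proof of Theorem \ref{limitharmo}, I will apply summation by parts (Proposition \ref{sumabyparts}) to write, for $m \geq n+1$,
\begin{align*}
S_m(x,y)-S_n(x,y) &= -\tau_{m+1}(x,y)\bigl(\widehat{F}-\widehat{\Phi}_m\bigr)(x,y)+\tau_{n+1}(x,y)\bigl(\widehat{F}-\widehat{\Phi}_n\bigr)(x,y)\\
&\qquad + \sum_{k=n+1}^m\bigl(\tau_{k+1}(x,y)-\tau_k(x,y)\bigr)\bigl(\widehat{F}-\widehat{\Phi}_k\bigr)(x,y),
\end{align*}
and then insert the improved bound on $|\widehat{F}-\widehat{\Phi}_k|$. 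The one analytic fact that needs checking is the monotonicity: the function $x\mapsto (\ln x)x^{-(k+\alpha)}$ has derivative $x^{-(k+\alpha)-1}(1-(k+\alpha)\ln x)$, which is negative as soon as $\ln x > 1/(k+\alpha)$. Since $k\geq 1$ and $\alpha\in(0,1]$, we have $k+\alpha>1$, hence $1/(k+\alpha)<1$ and the function is decreasing on $[e,\infty)$. Thus, for every $n\geq e$,
$$
\max_{n+1\leq k\leq m} (\ln k)\left(\frac{1}{k}\right)^{k+\alpha} \leq (\ln n)\left(\frac{1}{n}\right)^{k+\alpha},
$$
which mirrors the estimate used in \eqref{estimpartial}.

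Collecting the three summands exactly as in the proof of Theorem \ref{limitharmo}, using $\tau_{n+1}(x,y)\geq \tau_{m+1}(x,y)$ when $(x,y)\in\overline{B}_R$ and combining the telescoping sum $\tau_{n+1}(x,y)-\tau_{m+1}(x,y)$ with the maximum of the error factor, I will obtain
$$
|\widetilde{u}_m(x,y)-\widetilde{u}_n(x,y)| \leq \gamma_k(2\pi)^{\alpha}[f^{(k)}]_{C^{\alpha}([-\pi,\pi])}\Bigl(\tau_{m+1}(x,y)(\ln m)\tfrac{1}{m^{k+\alpha}}+2\tau_{n+1}(x,y)(\ln n)\tfrac{1}{n^{k+\alpha}}\Bigr),
$$
valid for all $(x,y)\in\overline{B}_R$ (the case $(x,y)=\mathbf{0}$ being trivial). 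Since Theorem \ref{limitharmo} already identifies the uniform limit $\widetilde{u}$ of $(\widetilde{u}_n)$ on $\overline{B}_R$, letting $m\to\infty$ (so that $\tau_{m+1}(x,y)(\ln m)/m^{k+\alpha}\to 0$) yields the advertised bound
$$
|\widetilde{u}(x,y)-\widetilde{u}_n(x,y)| \leq 2\gamma_k(2\pi)^{\alpha}[f^{(k)}]_{C^{\alpha}([-\pi,\pi])}\left(\frac{\sqrt{x^2+y^2}}{R}\right)^{n+1}(\ln n)\left(\frac{1}{n}\right)^{k+\alpha}.
$$

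The argument is essentially mechanical once the right Jackson estimate is in hand; the only place one has to be slightly careful is the constant in the monotonicity threshold, where verifying $k+\alpha>1$ lets us use the uniform threshold $n\geq e$ rather than a $k$-dependent one.
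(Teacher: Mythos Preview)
Your argument is correct and is exactly the approach the paper takes: the paper simply states that the result is a direct consequence of Corollary \ref{higfourcon} together with the proof of Theorem \ref{limitharmo}, which is precisely what you have spelled out, including the observation that $k+\alpha>1$ makes the monotonicity threshold $n\geq e$ uniform. One small notational slip: in your displayed maximum you reuse $k$ both as the running index and as the fixed differentiability order, so the expression $(1/k)^{k+\alpha}$ is ambiguous; relabel the summation index (e.g.\ to $\ell$) to avoid this clash.
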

\noindent
{\bf Acknowledgment.}\;
The author sincerely thanks the reviewers for their valuable comments and suggestions, which improved the quality of this paper.
\centerline{}

\centerline{}
\centerline{}
\noindent
Haesung Lee\\
Department of Mathematics and Big Data Science,  \\
Kumoh National Institute of Technology, \\
Gumi, Gyeongsangbuk-do 39177, Republic of Korea, \\
E-mail: fthslt@kumoh.ac.kr, \; fthslt14@gmail.com \\ \\

\end{document}